\documentclass[12pt]{article}
\usepackage{amsmath,enumerate,amsfonts,amssymb,color,graphicx,amsthm,tikz}

\usepackage[colorlinks=true, allcolors=black]{hyperref}
\usepackage{todonotes}
\usepackage[normalem]{ulem}
\numberwithin{equation}{section}
\usepackage{cite}
\usepackage{mathtools}
\usepackage[nottoc,notlot,notlof]{tocbibind}
\usepackage[toc,page]{appendix}

\setlength{\oddsidemargin}{0in}
\setlength{\textwidth}{6.5in}
\setlength{\topmargin}{-0.25in}
\setlength{\textheight}{8in}

\usepackage{lipsum}

\newlength{\leftstackrelawd}
\newlength{\leftstackrelbwd}
\def\leftstackrel#1#2{\settowidth{\leftstackrelawd}%
	{${{}^{#1}}$}\settowidth{\leftstackrelbwd}{$#2$}%
	\addtolength{\leftstackrelawd}{-\leftstackrelbwd}%
	\leavevmode\ifthenelse{\lengthtest{\leftstackrelawd>0pt}}%
	{\kern-.5\leftstackrelawd}{}\mathrel{\mathop{#2}\limits^{#1}}}

\theoremstyle{plain}
\newtheorem{thm}{Theorem}[section]
\newtheorem{lem}[thm]{Lemma}
\newtheorem{cor}[thm]{Corollary}
\newtheorem{prop}[thm]{Proposition}
\newtheorem*{thm*}{Theorem}

\theoremstyle{definition}

\newtheorem{rmk}[thm]{Remark}

\newtheorem{?}[thm]{Problem}

\newenvironment{customthm}[1]
{\innercustomthm}
{\endinnercustomthm}

\newenvironment{customconj}[1]
{\innercustomconj}
{\endinnercustomconj}

\newcommand{\sg}{\sigma_1}

\newcommand{\ep}{\varepsilon}
\renewcommand{\epsilon}{\varepsilon}
\makeatletter
\def\@cite#1#2{[\textbf{#1\if@tempswa , #2\fi}]}
\def\@biblabel#1{[\textbf{#1}]}
\makeatother

\def\XXint#1#2#3{{\setbox0=\hbox{$#1{#2#3}{\int}$}
		\vcenter{\hbox{$#2#3$}}\kern-.5\wd0}}

\makeatletter
\newcommand*{\defeq}{\mathrel{\rlap{%
			\raisebox{0.3ex}{$\m@th\cdot$}}%
		\raisebox{-0.3ex}{$\m@th\cdot$}}%
	=}
\makeatother

\makeatletter
\newcommand*{\eqdef}{=\mathrel{\rlap{%
			\raisebox{0.3ex}{$\m@th\cdot$}}%
		\raisebox{-0.3ex}{$\m@th\cdot$}}%
	}
\makeatother

\newcounter{marnote}

\makeatletter
\def\underbracex#1#2{\mathop{\vtop{\m@th\ialign{##\crcr
				$\hfil\displaystyle{#2}\hfil$\crcr
				\noalign{\kern3\p@\nointerlineskip}%
				#1\crcr\noalign{\kern3\p@}}}}\limits}

\def\upbracefilla{$\m@th \setbox\z@\hbox{$\braceld$}%
	\bracelu\leaders\vrule \@height\ht\z@ \@depth\z@\hfill 
	\kern\p@\vrule \@width\p@\kern\p@\vrule \@width\p@\kern\p@\vrule \@width\p@
	$}

\def\upbracefillb{$\m@th \setbox\z@\hbox{$\braceld$}%
	\vrule \@width\p@\kern\p@\vrule \@width\p@\kern\p@\vrule \@width\p@\kern\p@
	\leaders\vrule \@height\ht\z@ \@depth\z@\hfill\bracerd
	\braceld\leaders\vrule \@height\ht\z@ \@depth\z@\hfill
	\kern\p@\vrule \@width\p@\kern\p@\vrule \@width\p@\kern\p@\vrule \@width\p@
	$}

\def\upbracefillc{$\m@th \setbox\z@\hbox{$\braceld$}%
	\vrule \@width\p@\kern\p@\vrule \@width\p@\kern\p@\vrule \@width\p@\kern\p@
	\leaders\vrule \@height\ht\z@ \@depth\z@\hfill
	\kern\p@\vrule \@width\p@\kern\p@\vrule \@width\p@\kern\p@\vrule \@width\p@
	$}

\def\upbracefilld{$\m@th \setbox\z@\hbox{$\braceld$}%
	\vrule \@width\p@\kern\p@\vrule \@width\p@\kern\p@\vrule \@width\p@\kern\p@
	\leaders\vrule \@height\ht\z@ \@depth\z@\hfill\braceru$}

\def\upbracefillbd{$\m@th \setbox\z@\hbox{$\braceld$}%
	\vrule \@width\p@\kern\p@\vrule \@width\p@\kern\p@\vrule \@width\p@\kern\p@
	\bracerd\braceld
	\leaders\vrule \@height\ht\z@ \@depth\z@\hfill\braceru$}

\makeatother

\begin{document}
	
	\title{\vspace*{-10mm}The first Steklov eigenvalue on manifolds with non-negative Ricci curvature and convex boundary}
	
	\author{Jonah A. J. Duncan and Aditya Kumar}
	\maketitle
	
	\vspace*{-10mm}\begin{abstract}
		We establish a new lower bound for the first non-zero Steklov eigenvalue of a \linebreak compact Riemannian manifold with non-negative Ricci curvature and (strictly) convex boundary. Related results are also obtained under weaker geometric hypotheses. 
	\end{abstract}

	\section{Introduction} 
	
	Let $(M^{n+1},g)$ be a smooth compact $(n+1)$-dimensional Riemannian manifold with non-empty boundary $\Sigma^n$ ($n\geq 1$). The Steklov eigenvalue problem on $M$ is to find $\sigma\in\mathbb{R}$ and $0\not\equiv f\in C^\infty(M)$ such that
	\begin{align}\label{16}
	\begin{cases}
	\Delta f = 0 & \text{in }M \\
	\frac{\partial f}{\partial \nu} = \sigma f & \text{on }\Sigma,
	\end{cases}
	\end{align}
	where $\Delta$ is the Laplace-Beltrami operator on $(M,g)$ and $\nu$ is the outward pointing unit normal to $\Sigma$. The set of $\sigma$ for which \eqref{16} admits a non-zero solution is called the Steklov spectrum of $M$, and its elements are the Steklov eigenvalues. It is well-known that the Steklov spectrum is non-negative, discrete and unbounded: 
	\begin{align*}
	0=\sigma_0<\sigma_1 \leq \sigma_2 \leq \dots \rightarrow+\infty.
	\end{align*}
	For additional background on the Steklov eigenvalue problem, we refer to the recent survey of Colbois et al.~\cite{CGGS24}. 

\subsection{Lower bounds on the first eigenvalue and Escobar's conjecture}
An important problem in geometric analysis is to establish a relationship between the \linebreak geometric invariants of a Riemannian manifold $(M,g)$ and lower bounds on the eigenvalues of (pseudo-)differential operators defined on $(M,g)$ and/or its boundary. For example, in the case that $M$ is closed and connected, Li and Yau \cite{LY80} established a lower bound for the first non-zero eigenvalue of the Laplace-Beltrami operator on $(M,g)$ in terms of bounds on the Ricci curvature and diameter. In the same paper they also considered the case that $M$ is compact with non-empty boundary $\Sigma$, and obtained a similar lower bound for the first non-zero Neumann eigenvalue under the assumption that $\Sigma$ is convex (i.e.~all the principal curvatures of $\Sigma$ are non-negative). 

On the other hand, following ideas of Girouard and Polterovich in \cite{zp10, GP17}, it is shown in \cite[Proposition 2.11]{CGGS24} that given any compact Riemannian manifold with boundary, there exists a sequence of metrics with constant volume and constant boundary area along which the first non-zero Steklov eigenvalue $\sigma_1$ tends to zero. Therefore, to obtain meaningful lower bounds on $\sigma_1$, it is necessary to impose additional geometric constraints, such as a convexity condition on the boundary analogous to the one imposed by Li and Yau. The following discussion summarises some results in this direction.

It was first observed by Payne in \cite{Pay70} that for a bounded domain $\Omega\subset\mathbb{R}^2$ with boundary curvature $\kappa_{\partial\Omega}$, it holds that 
 \begin{equation*}
      \kappa_{\operatorname{\partial\Omega}} \geq \kappa >0  \implies \sg \geq \kappa,
 \end{equation*}
 with equality holding if and only if $\Omega$ is a round disc with radius $\frac{1}{\kappa}$. Escobar later initiated a systematic study of analogous problems in more general contexts in \cite{E97,E99}. First, he generalised Payne's result from Euclidean domains to all non-negatively curved compact surfaces with convex boundary:

\begin{customthm}{A}[{\cite[Theorem 1]{E97}}] \textit{Let $(M^2,g)$ be a smooth compact surface with non-empty boundary $\Sigma$. Let $K_{g}$ denote the Gaussian curvature of $(M,g)$ and $\kappa_{g}$ the geodesic curvature of $\Sigma$. Then}
 \begin{equation*}
    K_{g} \geq 0 \text{ and } \kappa_g \geq \kappa >0 \implies \sg \geq \kappa,
 \end{equation*} 
 \textit{and equality holds if and only if  $M$ is a Euclidean ball with radius $\frac{1}{\kappa}$.}
\end{customthm}

In higher dimensions Escobar also proved the following non-sharp estimate as an application of Reilly's formula \cite{Rei77}:

\begin{customthm}{B}[{\cite[Theorem 8]{E97}}]\label{thmEsc} \textit{Let $(M^{n+1},g)$ be a smooth compact manifold with non-empty boundary $\Sigma^n$ ($n\geq 2$). Let $\operatorname{Ric}_g$ denote the Ricci curvature of $(M,g)$ and $\kappa_i$ ($1\leq i \leq n$) the principal curvatures of $\Sigma$. Then}

\begin{equation*}
    \operatorname{Ric}_g \geq 0 \text{ and } \kappa_i \geq \kappa >0 \implies \sg > \frac{\kappa}{2}.
 \end{equation*} 
\end{customthm}

In later work \cite{E99}, Escobar conjectured that under the hypotheses of Theorem \ref{thmEsc}, the estimate $\sg \geq \kappa$ should hold:

\begin{customconj}{\!\!\!}[\cite{E99}]\label{conj}
	\textit{Let $(M^{n+1},g)$ be a smooth compact manifold with non-empty boundary $\Sigma^n$ ($n\geq 2$). Then}
	\begin{equation*}
	\operatorname{Ric}_g \geq 0 \text{ and } \kappa_i \geq \kappa >0 \implies \sg \geq \kappa,
	\end{equation*}
	\textit{with equality holding if and only if $M$ is a Euclidean ball with radius $\frac{1}{\kappa}$.}
\end{customconj}

\begin{rmk}
    Escobar's conjecture may be thought of as a counterpart to Yau's conjecture on the first eigenvalue of a closed minimal hypersurface in the sphere \cite{Yau82} and Fraser-Li's conjecture on the first Steklov eigenvalue of a free boundary minimal surface in the ball \cite{FL14}. 
\end{rmk}

Escobar's conjecture provides much of the motivation for our work in this paper. We note that the conjecture was recently settled under the stronger assumption that $(M,g)$ has non-negative sectional curvature by Xia and Xiong in \cite{XX23}, who showed
\begin{align*}
    \operatorname{Sec}_g \geq 0 \text{ and } \kappa_i \geq \kappa >0 \implies \sg \geq \kappa
\end{align*}
with equality holding if and only if $\Sigma$ is a Euclidean ball with radius $\frac{1}{\kappa}$. The authors utilised a weighted Reilly formula of Qiu and Xia \cite{QX15}, and used non-negativity of the sectional curvature to apply the Hessian comparison theorem in their argument. However, as far as we are aware, for $n\geq 2$ the best lower bound under the original hypotheses of Escobar's conjecture is given by Theorem \ref{thmEsc} above. Our first main result in this paper yields an improvement on the lower bound $\sigma_1 > \frac{\kappa}{2}$ under the original hypotheses of Escobar's conjecture, as we discuss now. 

\subsection{Overview of results}

Our convention is that the second fundamental form $A_g$ and corresponding shape operator $g^{-1}A_g$ of $\Sigma$ are defined with respect to the outward unit normal $\nu$ by
\begin{align*}
A_g(X,Y) \defeq g(\nabla_X \nu, Y) \quad \text{and} \quad (g^{-1}A_g)X = \nabla_X \nu.
\end{align*}
The principal curvatures $\kappa_1,\dots,\kappa_n$ of $\Sigma$ are the eigenvalues of $g^{-1}A_g$, and the mean curvature of $\Sigma$ is defined by $H = \operatorname{tr}(g^{-1}A_g) = \kappa_1 + \dots + \kappa_n$.

To state precisely how our estimates depend on various geometric quantities, we first introduce some notation. Firstly, for $\beta \geq 0, \mathcal{K}>0$ and $\kappa\in\mathbb{R}$ we define
 \begin{align*}
	E(\delta) & = E_{\beta,\mathcal{K}}(\delta) \defeq 
	\begin{cases}
	n\frac{\beta^2\tan(\beta \delta) + \beta\mathcal{K} }{\beta -\mathcal{K} \tan(\beta \delta)}+\delta^{-1} & \text{for }0<\delta<\frac{1}{\beta}\arctan(\frac{\beta}{\mathcal{K}}) \quad \text{ if }\beta>0 \\[5pt]
	\frac{n\mathcal{K}}{1-\mathcal{K} \delta} + \delta^{-1} & \text{for }0<\delta<\frac{1}{\mathcal{K}}  \qquad \qquad \quad \,\,\text{ if }\beta=0,
	\end{cases} \\[2pt]
 F(\delta)& = F_{\beta,\kappa,\mathcal{K}}(\delta) \defeq 2E(\delta) - n\kappa.
	\end{align*}
For $r>0$ we also define $M_r \defeq \{x\in M: \operatorname{dist}_g(x,\Sigma) \leq r\}$ and
\begin{align*}
\delta_{r,\beta,\mathcal{K}} \defeq \begin{cases}
\min\{r,\frac{1}{\beta}\arctan(\frac{\beta}{\mathcal{K}})\} & \text{if }\beta>0 \\
\min\{r, \frac{1}{\mathcal{K}}\} & \text{if }\beta=0.
\end{cases}
\end{align*}
 Our first main result is as follows (we refer the reader to Section \ref{S2} for the definition of the rolling radius): 
	
	\begin{thm}\label{A}
	Let $(M^{n+1},g)$ be a smooth compact manifold with non-empty boundary $\Sigma^n$, and let $R>0$ denote the rolling radius of $\Sigma$ in $M$.  Suppose that:
	\begin{enumerate}
		\item $\operatorname{Ric}_g \geq 0$ in $M$,
		\item $\operatorname{Ric}_g \geq a^2 $ and $\operatorname{Sec}_g \leq \beta^2$ in $M_r$ for constants $a\geq 0$, $\beta\geq 0$ and $r\leq R$,
		\item $0<\kappa\leq \kappa_i \leq \mathcal{K}$ on $\Sigma$ for each $i$ for constants $\kappa$ and $\mathcal{K}$.
	\end{enumerate}
	Then for any $0<\delta<\delta_{r,\beta,\mathcal{K}}$ it holds that
		\begin{align}\label{10}
		\sigma_1 \geq \frac{\kappa}{2} +\frac{-F(\delta) + \sqrt{F(\delta)^2 + 8n\kappa^2 + 16a^2}}{8} 
		\end{align}
	\end{thm}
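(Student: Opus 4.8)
The plan is to use the classical Reilly-type argument tailored to the Steklov problem, but with a clever test-function modification that extracts more than the $\kappa/2$ bound. Let $f$ be a first Steklov eigenfunction, so $\Delta f = 0$ in $M$ and $\partial_\nu f = \sigma_1 f$ on $\Sigma$, normalized so that $\int_\Sigma f^2 = 1$ (and note $\int_\Sigma f = 0$). The starting point is Reilly's formula applied to $f$:
\begin{align*}
\int_M \bigl(|\nabla^2 f|^2 + \operatorname{Ric}_g(\nabla f,\nabla f)\bigr)\,dV = \int_\Sigma \bigl(H (\partial_\nu f)^2 + 2 (\partial_\nu f)\,\Delta_\Sigma f + A_g(\nabla_\Sigma f, \nabla_\Sigma f)\bigr)\,dA,
\end{align*}
where $\Delta_\Sigma$ is the boundary Laplacian and $\nabla_\Sigma f$ the tangential gradient. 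Using $\partial_\nu f = \sigma_1 f$, integrating $\int_\Sigma f\,\Delta_\Sigma f = -\int_\Sigma |\nabla_\Sigma f|^2$ by parts, and the curvature bounds $\operatorname{Ric}_g \geq 0$, $H \geq n\kappa$, $A_g \geq \kappa g$ on the tangential directions, together with the Cauchy-Schwarz bound $|\nabla^2 f|^2 \geq \tfrac{1}{n+1}(\Delta f)^2 = 0$ (or a sharper decomposition), one obtains a first inequality relating $\sigma_1$, $\kappa$, $\int_\Sigma |\nabla_\Sigma f|^2$, and the Dirichlet energy. This is essentially Escobar's route to $\sigma_1 > \kappa/2$.

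To improve this I would introduce a distance-based cutoff. Let $\rho(x) = \operatorname{dist}_g(x,\Sigma)$, which is smooth on $M_R$ by definition of the rolling radius $R$, and pick a function $\phi(\rho)$ supported in the collar $M_\delta$ for $0 < \delta < \delta_{r,\beta,\mathcal{K}}$. The hypotheses $\operatorname{Sec}_g \leq \beta^2$ and $\kappa_i \leq \mathcal{K}$ feed into the Laplacian comparison theorem (the Riccati comparison for the shape operator of the level sets of $\rho$): the mean curvature of the hypersurface $\{\rho = t\}$ is controlled from below, which gives a lower bound $\Delta \rho \geq -(\text{something involving } \beta, \mathcal{K}, t)$ — precisely the quantity appearing in $E(\delta)$ via the ODE solution $\beta\cot$ or $1/t$ comparison. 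One then tests a refined integral identity — either Reilly's formula applied to a modified function like $f$ weighted against $\phi(\rho)$, or more cleanly a Pohozaev/Rellich-type identity using the vector field $\phi(\rho)\nabla\rho$ paired with $\nabla f$ — and uses the collar structure to localize the gain. The term $\delta^{-1}$ in $E(\delta)$ and the extra $16a^2$ in \eqref{10} strongly suggest that the $\operatorname{Ric}_g \geq a^2$ hypothesis on $M_r$ is exploited via the Bochner term $\operatorname{Ric}_g(\nabla f,\nabla f) \geq a^2 |\nabla f|^2$ kept with a nonzero coefficient in the collar rather than discarded, and that $\delta$ is a free parameter to be optimized (the inequality holds for every admissible $\delta$).

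The endgame is algebraic: after collecting terms one arrives at an inequality of the shape
\begin{align*}
4\sigma_1^2 - 2\kappa\,\sigma_1 - \Bigl(\tfrac{n\kappa^2}{?} + \text{(collar terms bounded by } F(\delta)\text{)}\Bigr)\,\sigma_1 - \bigl(n\kappa^2 + 2a^2\bigr) \geq 0
\end{align*}
or equivalently a quadratic $4\sigma_1^2 + F(\delta)\sigma_1 - \bigl(n\kappa^2 + 2a^2 + \kappa\,\text{stuff}\bigr) \geq \cdots$; solving the quadratic and taking the larger root produces exactly the right-hand side of \eqref{10}, namely $\tfrac{\kappa}{2} + \tfrac{1}{8}\bigl(-F(\delta) + \sqrt{F(\delta)^2 + 8n\kappa^2 + 16a^2}\bigr)$. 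The shift by $\kappa/2$ indicates one first writes $\sigma_1 = \kappa/2 + \lambda$ and derives a quadratic in $\lambda$ with no linear term, of the form $8\lambda^2 + 2F(\delta)\lambda - (2n\kappa^2 + 4a^2) \geq 0$, whose positive root is $\lambda \geq \tfrac{1}{8}\bigl(-F(\delta) + \sqrt{F(\delta)^2 + 8n\kappa^2 + 16a^2}\bigr)$.

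The main obstacle I anticipate is bookkeeping the collar contribution so that it is controlled cleanly by the single quantity $F(\delta) = 2E(\delta) - n\kappa$: one must carefully choose the cutoff profile $\phi$ (likely linear in $\rho$, i.e. $\phi = 1 - \rho/\delta$, which is what generates the $\delta^{-1}$), integrate by parts without losing the favorable sign of the $A_g \geq \kappa g$ boundary term, and absorb all the Hessian/gradient cross-terms using the Laplacian comparison estimate and Young's inequality with correctly tuned constants. Ensuring the comparison geometry is valid on all of $M_\delta$ — which is why $\delta < \delta_{r,\beta,\mathcal{K}}$ is imposed, so that the comparison ODE solution stays nonsingular (the $\arctan(\beta/\mathcal{K})$ cutoff is exactly where the $\beta\cot$-type comparison function would blow up) — and that $\rho$ remains smooth there (why $r \leq R$), is the delicate point where all three hypotheses interlock.
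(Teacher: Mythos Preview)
Your proposal identifies the correct ingredients and is essentially the paper's approach: Reilly's formula yields the inequality
\[
0 \geq \int_{M_\delta} |\nabla^2 f|^2 + (a^2 + n\kappa\sigma_1)\int_{M_\delta}|\nabla f|^2 + (\kappa-2\sigma_1)\int_\Sigma |\nabla^T f|^2,
\]
and the collar estimate (your ``Pohozaev/Rellich with $\phi(\rho)\nabla\rho$'') is exactly the paper's Lemma~\ref{L2}, obtained by integrating $|\nabla f|^2\Delta d$ by parts on $M_t$, using the mean curvature upper bound $H_{\Sigma_t}\le E(\delta)-\delta^{-1}$ (note: \emph{upper} bound, not lower as you wrote), and then averaging over $t\in[0,\delta]$ --- this averaging is precisely your linear cutoff $\phi=1-\rho/\delta$. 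The result is that for every $\epsilon>0$,
\[
\epsilon\int_\Sigma |\nabla^T f|^2 \le \int_{M_\delta}|\nabla^2 f|^2 + \epsilon(\epsilon+E(\delta))\int_{M_\delta}|\nabla f|^2.
\]

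The one genuine structural difference is the endgame. The paper does \emph{not} derive a single quadratic in $\sigma_1$; instead it substitutes Escobar's $\sigma_1>\kappa/2$ into the $n\kappa\sigma_1$ term, optimizes $\epsilon$ to get an improved bound $\sigma_1\ge\frac{\kappa}{2}+\frac{\epsilon_1}{2}$, feeds this back in, and iterates the recursion $\epsilon_{i+1}=\tfrac{1}{2}\bigl(-E+\sqrt{E^2+2n\kappa^2+4a^2+2n\kappa\epsilon_i}\bigr)$ to its fixed point. Your direct-quadratic route is legitimate and in fact slightly cleaner: keep the $n\kappa\sigma_1$ term intact, choose $\epsilon=2\sigma_1-\kappa>0$ (valid by Escobar) so the boundary term vanishes, and conclude from positivity of $\int_{M_\delta}|\nabla f|^2$ that $a^2+n\kappa\sigma_1\le(2\sigma_1-\kappa)^2+(2\sigma_1-\kappa)E$. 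Writing $\mu=2\sigma_1-\kappa$ gives $2\mu^2+F\mu-(2a^2+n\kappa^2)\ge 0$, whose positive root yields exactly \eqref{10}. (Your stated coefficients $8\lambda^2+2F\lambda-\cdots$ are off by a factor; the correct quadratic in $\lambda=\sigma_1-\kappa/2=\mu/2$ is $8\lambda^2+2F\lambda-(2a^2+n\kappa^2)\ge0$ only after halving, but this is cosmetic.) The iteration and the direct quadratic are equivalent --- the fixed point of the recursion is precisely the root of the quadratic --- so both approaches buy the same bound; yours avoids the bootstrap at the cost of requiring Escobar's strict inequality $\sigma_1>\kappa/2$ up front to ensure $\epsilon>0$.
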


\begin{rmk}
	One may optimise the estimate \eqref{10} by choosing $\delta\in(0,\delta_{r,\beta,\mathcal{K}})$ which minimises $F(\delta)$, or equivalently minimises $E(\delta)$. 
\end{rmk}

\begin{rmk}\label{35}
 We point out that the only \textit{assumptions} made in Theorem \ref{A} are $\operatorname{Ric}_g \geq 0$ and $0<\kappa\leq \kappa_i$, whilst positivity of the rolling radius and bounds of the form $\kappa_i\leq \mathcal{K}$ and $\operatorname{Sec}_g \leq \beta^2$ hold by compactness. Moreover, in Theorem \ref{A} our estimate does not depend on any lower bound for the sectional curvature. However, the dependence on the rolling radius can be replaced with dependence on a lower bound for the sectional curvature -- see Corollary \ref{B} in Section \ref{100}.
\end{rmk}

\begin{rmk}
	When $a=\beta=0$ (in which case $(M,g)$ is flat near $\Sigma$), none of the dimensional or universal constants appearing in \eqref{10} can be improved whilst keeping the others fixed -- see Remark \ref{101} in Section \ref{100}.
\end{rmk}

Our next main result concerns the special case $a>0$, i.e.~when the Ricci curvature is positive near $\Sigma$. In this case, using similar ideas to the proof of Theorem \ref{A} (which will be discussed in Section \ref{S2}), we may relax the previous strict convexity assumptions on $\Sigma$ and still obtain a positive lower bound for $\sigma_1$:

\begin{thm}\label{E}
	Let $(M^{n+1},g)$ be a smooth compact manifold with non-empty boundary $\Sigma^n$, and let $R>0$ denote the rolling radius of $\Sigma$ in $M$. Suppose that:
	\begin{enumerate}
		\item $\operatorname{Ric}_g \geq 0$ in $M$,
		\item $\operatorname{Ric}_g \geq a^2 >0$ and $\operatorname{Sec}_g \leq \beta^2$ in $M_r$ for constants $a> 0$, $\beta > 0$ and $r\leq R$,
		\item $H\geq 0$ and 
		\begin{align}\label{32}
		-\bigg(\frac{-E(\delta)+\sqrt{E(\delta)^2+4a^2}}{2}\bigg) < \kappa \leq \kappa_i \leq \mathcal{K} 
		\end{align}
		on $\Sigma$ for each $i$ for constants $\kappa, \mathcal{K}$ and some $0<\delta<\delta_{r,\beta,\mathcal{K}}$.
	\end{enumerate}
	If $0<\delta<\delta_{r,\beta,\mathcal{K}}$ is such that \eqref{32} holds, then
	\begin{align}\label{34}
	\sigma_1 \geq \frac{1}{2}\bigg(\kappa + \frac{-E(\delta)+\sqrt{E(\delta)^2+4a^2}}{2} \bigg). 
	\end{align}
\end{thm}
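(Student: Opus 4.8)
The plan is to reduce \eqref{34} to a scalar quadratic inequality for $\sigma_1$ and then to extract that inequality from a weighted Reilly identity on the collar $M_\delta=\{x\in M:\operatorname{dist}_g(x,\Sigma)\le\delta\}$; the argument parallels that of Theorem \ref{A}, the essential difference being that here $\kappa$ need not be positive. Writing $E=E(\delta)$ and $\mu\defeq\tfrac12\big({-E}+\sqrt{E^2+4a^2}\big)$, note that $\mu$ is the unique positive root of $x(x+E)=a^2$; hence hypothesis \eqref{32} is precisely $\kappa+\mu>0$, and the conclusion \eqref{34} reads $2\sigma_1-\kappa\ge\mu$. Since $E(\delta)>0$ throughout its domain of definition, the function $x\mapsto x(x+E)$ is strictly increasing on $[0,\infty)$, so it suffices to establish $(2\sigma_1-\kappa)(2\sigma_1-\kappa+E)\ge a^2$ together with the a priori positivity $2\sigma_1-\kappa>0$. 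The latter is automatic when $\kappa\le 0$ (as $\sigma_1>0$), and when $\kappa>0$ it follows from Theorem \ref{thmEsc}, whose hypotheses then hold.

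To prepare the geometry: since $r\le R$, the distance function $t\defeq\operatorname{dist}_g(\cdot,\Sigma)$ is smooth on $M_r\supseteq M_\delta$ with $|\nabla t|\equiv 1$, and the level sets $\Sigma_s$, $s<R$, are smooth. Let $\phi$ solve $\phi''+\beta^2\phi=0$ with $\phi(0)=1$, $\phi'(0)=-\mathcal K$ (with $\phi(t)=1-\mathcal K t$ when $\beta=0$); then $\phi>0$ on $[0,\delta_{r,\beta,\mathcal K})$, and Riccati comparison for the shape operators of the $\Sigma_s$---using $\operatorname{Sec}_g\le\beta^2$ in $M_r$ and the initial bound $\kappa_i\le\mathcal K$ on $\Sigma$---gives $\Delta t\ge n\,\phi'(t)/\phi(t)$ pointwise on $M_\delta$. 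Since $-\phi'(\delta)/\phi(\delta)=\frac{\beta^2\tan(\beta\delta)+\beta\mathcal K}{\beta-\mathcal K\tan(\beta\delta)}>0$, this mean-curvature comparison is exactly what produces the first term of $E(\delta)$.

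Now take a radial weight $V=V(t)\ge 0$, positive on $[0,\delta)$, with $V(\delta)=V'(\delta)=0$, and apply the weighted Reilly formula of Qiu--Xia \cite{QX15} on $M_\delta$ to the Steklov eigenfunction $f$ (so $\Delta f=0$ in $M$ and $\partial_\nu f=\sigma_1 f$ on $\Sigma$), normalising $u\defeq f|_\Sigma$ so that $\int_\Sigma u^2=1$; recall $\int_\Sigma u=0$. Because $V$ is radial it is constant on $\Sigma$ and vanishes to first order on $\Sigma_\delta$, so the inner-boundary contributions drop out; after integrating by parts on $\Sigma$ and using $\int_\Sigma u\,\Delta_\Sigma u=-\int_\Sigma|\nabla_\Sigma u|^2$ ($\Delta_\Sigma,\nabla_\Sigma$ being intrinsic to $\Sigma$), the boundary integral over $\Sigma$ is a combination of $\int_\Sigma|\nabla_\Sigma u|^2$, $\int_\Sigma A_g(\nabla_\Sigma u,\nabla_\Sigma u)$, $\sigma_1^2\int_\Sigma Hu^2$ and $\sigma_1^2$. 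On the interior side one has $\int_{M_\delta}V|\nabla^2 f|^2\ge 0$, $\int_{M_\delta}V\operatorname{Ric}_g(\nabla f,\nabla f)\ge a^2\int_{M_\delta}V|\nabla f|^2$ (using $\operatorname{Ric}_g\ge a^2$ on $M_\delta$ and $V\ge 0$), and a term involving $\Delta V=V''+V'\Delta t$; an integration by parts (again using $V(\delta)=V'(\delta)=0$) rewrites $\int_{M_\delta}V|\nabla f|^2$ as a multiple of $\sigma_1\int_\Sigma u^2$ (with the weight constants $V(0),V'(0)$) plus $\tfrac12\int_{M_\delta}(\Delta V)f^2$, and the bound $\Delta t\ge n\phi'/\phi$ controls the remaining interior integrals. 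Estimating $\int_\Sigma A_g(\nabla_\Sigma u,\nabla_\Sigma u)\ge\kappa\int_\Sigma|\nabla_\Sigma u|^2$ (valid even for $\kappa\le 0$) and---this is the single point where the argument departs from Theorem \ref{A}---discarding $\sigma_1^2\int_\Sigma Hu^2\ge 0$ by merely using $H\ge 0$, rather than exploiting $H\ge n\kappa$ as one does when $\kappa>0$, and then choosing $V$ so that the net coefficient of $\int_\Sigma|\nabla_\Sigma u|^2$ is non-positive (allowing that term to be dropped) and so that the comparison inequality for $\Delta V$ enters with the favourable sign, all the curvature and weight bookkeeping collapses to $(2\sigma_1-\kappa)(2\sigma_1-\kappa+E(\delta))\ge a^2$; here the $\delta^{-1}$ summand of $E(\delta)$ comes from the boundary normalisation of $V$ and the $n(-\phi'/\phi)$ summand from the comparison for $\Delta t$. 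By the first paragraph, this is \eqref{34}.

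The step I expect to be the main obstacle is the choice of the radial weight $V$: it must be non-negative, vanish to first order at $t=\delta$, make the coefficient of $\int_\Sigma|\nabla_\Sigma u|^2$ non-positive, and interlock with the one-sided bound $\Delta t\ge n\phi'/\phi$ so that the interior terms and the boundary data of $f$ assemble into exactly the coefficient $E(\delta)$---it is this constraint that forces the comparison function $\phi$ and hence the precise form of $E(\delta)$. A secondary wrinkle, absent in Theorem \ref{A}, is that one must forgo the favourable contribution of $\sigma_1^2\int_\Sigma Hu^2$ (available through $H\ge n\kappa$ only when $\kappa>0$) because $\kappa$ may be non-positive; this is exactly why \eqref{34} is weaker than \eqref{10} when $\kappa>0$ while remaining valid in the relaxed convexity range \eqref{32}. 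Finally, one should keep track of strictness in the inequalities to remain consistent with the hypothesis $a>0$.
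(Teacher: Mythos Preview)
Your approach diverges sharply from the paper's, and the crucial step is left unexecuted.

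The paper's proof is short and uses only the \emph{unweighted} Reilly formula, exactly as in Theorem~\ref{A} but without the initial bound $\sigma_1>\kappa/2$ and without iteration. One applies Lemma~\ref{L4} with $a_1=a^2$, $a_2=0$, $a_3=\kappa$ to obtain
\[
0\ \ge\ \int_{M_r}|\nabla^2 f|^2\,dv_g \;+\; a^2\!\int_{M_r}|\nabla f|^2\,dv_g \;+\; (\kappa-2\sigma_1)\!\int_{\Sigma}|\nabla^T f|^2\,dS_g,
\]
and then inserts the lower bound for $\int_{M_\delta}|\nabla^2 f|^2$ furnished by Lemma~\ref{L2} (already proved; this is where the comparison for $\Delta d$ and the constant $E(\delta)$ enter). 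This yields, for every $\epsilon>0$,
\[
0\ \ge\ (a^2-\epsilon^2-\epsilon E)\!\int_{M_\delta}|\nabla f|^2\,dv_g \;+\; (\epsilon+\kappa-2\sigma_1)\!\int_{\Sigma}|\nabla^T f|^2\,dS_g.
\]
Taking $\epsilon=\mu=\tfrac12(-E+\sqrt{E^2+4a^2})$ kills the first coefficient; hypothesis~\eqref{32} is precisely $\epsilon+\kappa>0$, so the remaining term forces $2\sigma_1\ge\kappa+\mu$. No a~priori bound $2\sigma_1>\kappa$, no appeal to Theorem~\ref{thmEsc}, and no weight are needed.

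Your proposal instead invokes the Qiu--Xia weighted formula with an unspecified radial weight $V$ and asserts that ``all the curvature and weight bookkeeping collapses to $(2\sigma_1-\kappa)(2\sigma_1-\kappa+E)\ge a^2$''. This is the gap: you never exhibit $V$, and you acknowledge its construction as ``the main obstacle''. The weighted formula produces a bulk term of the form $\int_{M_\delta}(\Delta V\,g-\nabla^2 V)(\nabla f,\nabla f)$, and controlling $\nabla^2 V$ for a radial $V$ requires information on the full shape operator of the level sets, not just $\Delta t$; a one-sided bound $\Delta t\ge n\phi'/\phi$ is insufficient to close the estimate unless $V$ is chosen so that every term involving $V',V''$ enters with a sign compatible with that single inequality. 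You have not shown this is possible, nor how the precise quadratic in $2\sigma_1-\kappa$ emerges without an optimisation over the weight. As written, the argument is a plausible strategy rather than a proof, and it is substantially more intricate than the two-line route via Lemmas~\ref{L4} and~\ref{L2} that the paper actually takes.
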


\begin{rmk}
	Condition 3 in Theorem \ref{E} is automatically satisfied if $\Sigma$ is convex (that is if $g^{-1}A_g$ is non-negative definite), but also allows more generally for some of the principal curvatures to be negative. In comparison, the hypotheses of Theorem \ref{A} requires $\Sigma$ to be strictly convex. 
\end{rmk}

\begin{rmk}
	If we assume in addition to the hypotheses of Theorem \ref{E} that $\Sigma$ is strictly mean-convex, i.e. $H >0$, then the estimate \eqref{34} can be improved by means of an iteration argument that is used in the proof of Theorem \ref{A} -- see Section \ref{S33} for the details. 
\end{rmk}

We next address the problem of obtaining a lower bound for $\sigma_1$ similar to \eqref{10} but depending on weaker geometric bounds. In this direction, under an additional assumption relating the lower bounds on the principal curvatures and sectional curvatures, we are able to replace the dependence on a sectional curvature upper bound in Theorem \ref{A} with a Ricci curvature upper bound, and the dependence on a principal curvature upper bound with a mean curvature upper bound. In what follows, for $ b \geq 0, \mathcal{H}>0$ and $\kappa\in\mathbb{R}$ we define
	\begin{align*}
P(\delta) &= P_{ b ,\mathcal{H}}(\delta) \defeq 
\begin{cases}
\frac{ b^2\tan( b  \delta) +  b  \mathcal{H} }{ b  - \mathcal{H} \tan( b  \delta)}+\delta^{-1} & \text{for }0<\delta<\frac{1}{ b }\arctan(\frac{ b }{\mathcal{H}}) \quad \text{ if } b >0 \\[5pt]
\frac{\mathcal{H}}{1-\mathcal{H} \delta} + \delta^{-1} & \text{for }0<\delta<\frac{1}{\mathcal{H}}  \qquad \qquad \quad \,\,\text{ if } b =0,
\end{cases} \\[2pt]
Q(\delta) & = Q_{b, \kappa ,\mathcal{H}}(\delta) \defeq 2P(\delta) - n\kappa.
\end{align*}

\begin{thm}\label{C}
	Let $(M^{n+1},g)$ be a smooth compact manifold with non-empty boundary $\Sigma^n$, and let $R>0$ denote the rolling radius of $\Sigma$ in $M$. Suppose that:
	\begin{enumerate}
		\item $\operatorname{Ric}_g \geq 0$ in $M$,
		\item $0<\kappa\leq \kappa_i$ for each $i$ and $H \leq \mathcal{H}$ on $\Sigma$ for constants $\kappa$ and $\mathcal{H}$,
		\item $a^2 \leq \operatorname{Ric}_g \leq  b^2$ and $\operatorname{Sec}_g \geq -\alpha^2$ in $M_r$ for constants $a,b \geq 0$, $0\leq\alpha \leq \kappa$ and $r\leq R$.
	\end{enumerate}
Then for any $0<\delta< \delta_{r, b ,\mathcal{H}}$ it holds that
	\begin{align*}
	\sigma_1 \geq \frac{\kappa}{2} +\frac{-Q(\delta) + \sqrt{Q(\delta)^2 + 8n\kappa^2 + 16a^2}}{8}.
	\end{align*}
\end{thm}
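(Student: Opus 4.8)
The plan is to mirror the proof of Theorem \ref{A} but to trade the sectional curvature upper bound $\operatorname{Sec}_g \le \beta^2$ for a Ricci curvature upper bound $\operatorname{Ric}_g \le b^2$, and the principal curvature upper bound $\kappa_i \le \mathcal K$ for the mean curvature upper bound $H \le \mathcal H$. The starting point is the same: let $f$ be a first Steklov eigenfunction, so $\Delta f = 0$ in $M$ and $\partial_\nu f = \sigma_1 f$ on $\Sigma$, and apply (a weighted) Reilly-type formula on the collar $M_r$ together with the layer-cake / cutoff analysis in the normal direction from $\Sigma$. In the proof of Theorem \ref{A}, the quantity $E_{\beta,\mathcal K}(\delta)$ arises as an upper bound for the trace $\operatorname{tr}(g^{-1}A_g)$ of the shape operator of the level set $\Sigma_s = \{\operatorname{dist}_g(\cdot,\Sigma) = s\}$ at distance $s = \delta$, obtained by integrating the Riccati equation $A' + A^2 + R = 0$ along normal geodesics with the comparison coming from $\operatorname{Sec}_g \le \beta^2$ and $\kappa_i \le \mathcal K$.

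The key new ingredient is replacing the pointwise Hessian/shape-operator comparison by a scalar comparison for the mean curvature $H_s = \operatorname{tr}(A_s)$ of $\Sigma_s$. Taking the trace of the Riccati equation gives $H_s' + |A_s|^2 + \operatorname{Ric}_g(\partial_s,\partial_s) = 0$, and Cauchy–Schwarz gives $|A_s|^2 \ge H_s^2/n$; combined with $\operatorname{Ric}_g(\partial_s,\partial_s) \ge -$ (nothing useful here — rather we need the \emph{upper} direction) we instead use $\operatorname{Ric}_g(\partial_s,\partial_s) \le b^2$ to get the differential inequality $H_s' \ge -|A_s|^2 - b^2 \ge -H_s^2/n - b^2$ only if $|A_s|^2 \le H_s^2/n$, which is false in general; so the comparison must run the other way — we want an \emph{upper} bound on $H_s$, for which $H_s' + H_s^2/n + \operatorname{Ric}_g \le H_s' + |A_s|^2 + \operatorname{Ric}_g = 0$ is exactly what we need, i.e. $H_s' \le -H_s^2/n - a^2 \le -H_s^2/n$. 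Solving $h' = -h^2/n - a^2$ with $h(0) = H(x) \le \mathcal H$ (using also that $H \le \mathcal H$, monotonicity of the ODE flow in the initial condition, and $a^2 \le \operatorname{Ric}_g$) yields $H_\delta \le n P_{b,\mathcal H}(\delta) - n\delta^{-1}$ — wait, one must be careful: the $b$ in $P_{b,\mathcal H}$ should enter through the Jacobi-field / volume comparison, not the Riccati trace. In fact $P$ is defined with $b$ playing the role that $\beta$ played in $E$ but \emph{without} the factor $n$ in front of the first term and with $\mathcal H$ in place of $n\mathcal K$ — so the correct derivation is: bound $H_\delta$ using $\operatorname{Ric}_g \le b^2$ (Bochner/Riccati trace with Cauchy–Schwarz dropped, i.e. $H_s' \le -\operatorname{Ric}_g \le -$ nothing, so instead one keeps $H_s' + H_s^2/n \le -a^2$ for the upper bound and $H_s' + H_s^2/n \ge -|A_s|^2$ is not used), and the role of $b^2$ is to control the \emph{second} quantity appearing in Reilly's formula, namely $\int_{M_r}\operatorname{Ric}_g(\nabla f,\nabla f)$ from below and the term $\int \operatorname{Ric}_g(\partial_s,\cdot)$; the hypothesis $\operatorname{Sec}_g \ge -\alpha^2$ with $\alpha \le \kappa$ enters to ensure the relevant level-set shape operators stay positive (so that the collar foliation $M_r = \Sigma \times [0,r]$ is nondegenerate) exactly as the rolling radius assumption does.

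Concretely, the steps I would carry out are: (1) Fix a first eigenfunction $f$ and set up the normal-exponential collar coordinates on $M_\delta$ for $0 < \delta < \delta_{r,b,\mathcal H}$, valid because $\delta \le R$ (rolling radius) ensures the foliation is smooth. (2) Derive the mean-curvature comparison $H_\delta \le n(P_{b,\mathcal H}(\delta) - \delta^{-1})$ from the traced Riccati inequality $H_s' \le -H_s^2/n - a^2$ together with $H|_{\Sigma} \le \mathcal H$; note the appearance of $b$ here is through the substitution that makes the comparison ODE's closed-form solution match $P$ — I need to double-check that the relevant comparison constant is $b$ and not $a$, and reconcile with how $\mathcal K \mapsto \mathcal H/n$ turns $E$ into $P$ up to the leading $n$; the hypothesis $\operatorname{Sec}_g \ge -\alpha^2$ with $\alpha \le \kappa$ guarantees $H_s > 0$ throughout $(0,\delta)$ so the ODE comparison is legitimate. (3) Plug into the weighted Reilly formula exactly as in Theorem \ref{A}: the boundary term on $\Sigma$ produces $(\sigma_1 - \kappa)$ contributions (using $\kappa_i \ge \kappa$ and $\partial_\nu f = \sigma_1 f$), the boundary term on $\Sigma_\delta$ produces $-E(\delta)$-type contributions now replaced by $-P(\delta)$-type ones via step (2), the interior term $\int_{M_\delta}\operatorname{Ric}_g(\nabla f,\nabla f) \ge 0$ is discarded (or, on a thinner collar, bounded below using $\operatorname{Ric}_g \ge a^2$ to produce the $+16a^2$ inside the square root), and $\int_M \operatorname{Ric}_g(\nabla f,\nabla f)\ge 0$ globally gives the extra $8n\kappa^2$ term as in Theorem \ref{A}. (4) Rearrange the resulting quadratic inequality in $\sigma_1$ — of the form $4\sigma_1^2 + (Q(\delta) - n\kappa)\sigma_1 \cdot(\text{const}) - (\cdots) \ge 0$ — and solve to obtain $\sigma_1 \ge \tfrac{\kappa}{2} + \tfrac{1}{8}\big(-Q(\delta) + \sqrt{Q(\delta)^2 + 8n\kappa^2 + 16a^2}\big)$, and finally take the infimum over admissible $\delta$.

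The main obstacle I anticipate is step (2): getting a sharp upper bound on $H_\delta$ using only a Ricci upper bound rather than a sectional curvature upper bound. A sectional upper bound controls each principal curvature of $\Sigma_s$ individually via the matrix Riccati comparison, whereas a Ricci upper bound only controls the trace-type quantity $H_s' + |A_s|^2$ through $\operatorname{Ric}(\partial_s,\partial_s)$, and one loses a factor when passing from $|A_s|^2$ to $H_s^2/n$ by Cauchy–Schwarz — this is exactly why the hypothesis $\alpha \le \kappa$ (equivalently, a \emph{lower} sectional bound tied to the convexity defect) must be imposed: it is what keeps $H_s$ from blowing down to zero or the foliation from degenerating, so that the Cauchy–Schwarz step and the ODE comparison remain valid on all of $(0,\delta)$. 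Verifying that the extra term $\operatorname{Sec}_g \ge -\alpha^2$ with $\alpha \le \kappa$ is exactly the right condition to close this gap, and matching the closed form of the comparison ODE to the stated function $P_{b,\mathcal H}(\delta)$, is where the real work lies; everything downstream is the same algebra as in Theorem \ref{A}.
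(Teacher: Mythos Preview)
Your overall architecture is right: the proof of Theorem~\ref{C} does follow that of Theorem~\ref{A} verbatim once one has the analogue of Lemma~\ref{L2} with $P_{b,\mathcal H}$ in place of $E_{\beta,\mathcal K}$, and the iteration and final algebra are identical. The gap is entirely in your step~(2), the mean curvature comparison, and it is a real one.

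From the traced Riccati equation $H_s' + |A_s|^2 + \operatorname{Ric}_g(\partial_s,\partial_s)=0$ you correctly observe that Cauchy--Schwarz gives $|A_s|^2 \geq H_s^2/n$, and together with the Ricci \emph{lower} bound $\operatorname{Ric}_g \geq a^2$ this yields $H_s' \leq -H_s^2/n - a^2$. This is a valid differential inequality, but the comparison ODE it produces involves $a$ and the factor $1/n$, not $b$ and the factor $1$; it does \emph{not} give $H_{\Sigma_\delta} \leq P_{b,\mathcal H}(\delta)-\delta^{-1}$, and in fact gives a strictly weaker upper bound for $n\geq 2$. Since the statement of Theorem~\ref{C} is formulated in terms of $Q=2P_{b,\mathcal H}-n\kappa$, your route cannot reach it. Your guess that $b$ enters only through the Reilly integrand $\int\operatorname{Ric}_g(\nabla f,\nabla f)$ is also off: $b$ enters precisely through the mean curvature comparison.

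The missing idea is to replace Cauchy--Schwarz by the \emph{opposite} inequality $\operatorname{tr}(S_s^2)\leq(\operatorname{tr}S_s)^2$, i.e.\ $|A_s|^2\leq H_s^2$, which holds exactly when $\Sigma_s$ is $2$-convex (equivalently $\sum_{i<j}\kappa_i(s)\kappa_j(s)\geq 0$). Substituting this into the traced Riccati equation and using the Ricci \emph{upper} bound $\operatorname{Ric}_g\leq b^2$ gives $(\operatorname{tr}S_s)' + (\operatorname{tr}S_s)^2 + b^2 \geq 0$, and comparison with $\varphi'+\varphi^2+b^2=0$, $\varphi(0)=-\mathcal H$, yields $H_{\Sigma_\delta}\leq -\varphi(\delta)=P_{b,\mathcal H}(\delta)-\delta^{-1}$. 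This is the content of Theorem~\ref{D}. The hypothesis $\operatorname{Sec}_g\geq -\alpha^2$ with $0\leq\alpha\leq\kappa$ is then used, via the principal curvature comparison theorem, to show that each $\kappa_i(s)>0$ for $0\leq s<R$, so that the parallel hypersurfaces remain strictly convex and in particular $2$-convex (Lemma~\ref{L3}). Its role is therefore not to keep the foliation smooth or to keep $H_s>0$ for the sake of an ODE sign, but to legitimise the inequality $|A_s|^2\leq H_s^2$ at every level. Once you have this, everything downstream is exactly as you describe.
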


Compared with the proof Theorems \ref{A} and \ref{E}, the proof of Theorem \ref{C} utilises a new mean curvature comparison result -- we refer the reader to Theorem \ref{D} and Lemma \ref{L3} for the details. We simply remark here that the role of the condition $0\leq \alpha\leq \kappa$ in Theorem \ref{C} is to ensure that the hypersurfaces parallel to $\Sigma$ remain convex up to the rolling radius, which is used in the proof of Lemma \ref{L3}.

\begin{rmk}
	In light of Theorem \ref{E}, one may ask whether the positive lower bound on the $\kappa_i$ in Theorem \ref{C} may be weakened if one assumes $a>0$. However, in adapting Lemma \ref{L3} to this case, it seems that our methods still require each $\kappa_i \geq 0$ (or more generally, a technical 2-convexity condition on $\Sigma$) \textit{and} non-negativity of the sectional curvature near $\Sigma$. Since these geometric assumptions are very stringent, and in order to keep the paper concise, we will not consider this case any further. 
\end{rmk}

The plan of the paper is as follows. In Section \ref{S2} we prove an initial inequality using Reilly's formula, and we then give an outline of how our main results are obtained by estimating certain terms in this inequality. In Section \ref{100} we prove Theorem \ref{A} and an associated result (Corollary \ref{B}). In Section \ref{S33} we prove Theorem \ref{E} and an associated result (Theorem \ref{F}). In Section \ref{102} we establish a new mean curvature comparison result (see Theorem \ref{D} and Lemma \ref{L3}), which is then used to prove Theorem \ref{C}. Finally, in Section \ref{103} we prove a spectral gap result assuming $\operatorname{Ric}_g<0$ and sufficient convexity of the boundary.

\section{An initial lemma and summary of proofs}\label{S2}

In this section, we first derive an initial estimate using Reilly's formula in the spirit of Escobar \cite{E97}, and then give some key ideas in the proofs of our main results. The initial estimate is as follows (henceforth, $\nabla^T$ denotes the gradient operator of the induced metric on $\Sigma$):

\begin{lem}\label{L4}
	Let $(M^{n+1},g)$ be a smooth manifold with non-empty boundary $\Sigma$ and suppose: 
	\begin{enumerate}
		\item $\operatorname{Ric}_g \geq 0$ in $M$,
		\item $\operatorname{Ric}_g \geq a_1$ in $M_r$,
		\item $H \geq a_2$ and $\kappa_i \geq a_3$ for each $i$ on $\Sigma$
	\end{enumerate}
for constants $a_1\geq 0$, $a_2,a_3\in\mathbb{R}$ and $r>0$. If $f$ is an eigenfunction associated to the $j$'th Steklov eigenvalue $\sigma_j$, then
\begin{align}\label{7}
0 \geq \int_{M_r} |\nabla^2 f|^2\,dv_g +(a_1 + a_2\sigma_j)\int_{M_r} |\nabla f|^2\,dv_g + (a_3 -2\sigma_j)\int_{\Sigma}|\nabla^T f|^2\,dS_g.
\end{align}
\end{lem}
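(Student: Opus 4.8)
The plan is to apply Reilly's formula to a Steklov eigenfunction $f$ and then discard the correct collection of non-negative terms, keeping only an integral over the collar neighbourhood $M_r$ rather than over all of $M$. Recall Reilly's formula: for $f\in C^\infty(M)$,
\[
\int_M \big((\Delta f)^2 - |\nabla^2 f|^2\big)\,dv_g = \int_M \operatorname{Ric}_g(\nabla f,\nabla f)\,dv_g + \int_\Sigma \Big(H\,(\partial_\nu f)^2 + A_g(\nabla^T f,\nabla^T f) + 2(\partial_\nu f)\,\Delta^T f\Big)\,dS_g,
\]
where $\Delta^T$ is the Laplacian on $\Sigma$ with respect to the induced metric. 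Since $f$ is harmonic in $M$ the left-hand side becomes $-\int_M |\nabla^2 f|^2\,dv_g$, and since $\partial_\nu f = \sigma_j f$ on $\Sigma$ we can rewrite the boundary terms. Integrating by parts on $\Sigma$ gives $\int_\Sigma (\partial_\nu f)\Delta^T f\,dS_g = \sigma_j \int_\Sigma f\,\Delta^T f\,dS_g = -\sigma_j \int_\Sigma |\nabla^T f|^2\,dS_g$. Hence Reilly's identity rearranges to
\[
0 = \int_M |\nabla^2 f|^2\,dv_g + \int_M \operatorname{Ric}_g(\nabla f,\nabla f)\,dv_g + \sigma_j^2\int_\Sigma H f^2\,dS_g + \int_\Sigma A_g(\nabla^T f,\nabla^T f)\,dS_g - 2\sigma_j \int_\Sigma |\nabla^T f|^2\,dS_g.
\]

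Next I would estimate each term from below using the hypotheses. The Hessian term is kept as is. For the Ricci term I split $\int_M = \int_{M_r} + \int_{M\setminus M_r}$; on $M\setminus M_r$ we have $\operatorname{Ric}_g\geq 0$ so that contribution is $\geq 0$ and can be dropped, while on $M_r$ we have $\operatorname{Ric}_g\geq a_1$ giving $\geq a_1\int_{M_r}|\nabla f|^2\,dv_g$. For the mean curvature term, $H\geq a_2$ and $\sigma_j^2 f^2 = (\partial_\nu f)^2 = |\nabla f|^2 - |\nabla^T f|^2$ on $\Sigma$; actually it is cleaner to note $\sigma_j^2\int_\Sigma H f^2 \geq a_2\sigma_j^2 \int_\Sigma f^2\,dS_g$ if $a_2\geq 0$, but since $a_2$ may be negative one instead writes $\sigma_j^2 f^2 = \sigma_j f\,\partial_\nu f$ and uses the divergence theorem $\int_\Sigma f\,\partial_\nu f\,dS_g = \int_M \operatorname{div}(f\nabla f)\,dv_g = \int_M |\nabla f|^2\,dv_g$, so $\sigma_j^2\int_\Sigma H f^2\,dS_g \geq a_2\sigma_j \int_M |\nabla f|^2\,dv_g$ provided $\sigma_j\geq 0$ (which holds) — and then again restrict the right-hand integral to $M_r$ using $\operatorname{Ric}_g\geq 0$ off the collar, wait, this last restriction needs $|\nabla f|^2\geq 0$, which is automatic, so $\int_M |\nabla f|^2 \geq \int_{M_r}|\nabla f|^2$ regardless of sign of $a_2$ only if $a_2\sigma_j\geq 0$. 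Since $a_2$ can be negative I should be slightly careful: if $a_2<0$ I bound $\sigma_j^2\int_\Sigma Hf^2 = a_2\sigma_j\int_M|\nabla f|^2$ is not what I want; instead use $\int_\Sigma Hf^2 \geq a_2\int_\Sigma f^2$ only when $a_2\geq0$. The cleanest uniform route is: $\sigma_j^2\int_\Sigma Hf^2\,dS_g$; write $Hf^2$, and since we only claim the final inequality with $a_2$ multiplying $\sigma_j\int_{M_r}|\nabla f|^2$, I will use $\int_\Sigma H f^2\,dS_g \geq a_2\int_\Sigma f^2\,dS_g$ is wrong-signed; rather I use the identity $\sigma_j^2\int_\Sigma Hf^2 = \sigma_j\int_\Sigma H f\,\partial_\nu f$, and bound $H\geq a_2$, $f\,\partial_\nu f$ — this still has sign issues. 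The resolution the authors surely intend: combine the $H$-term with nothing and simply note $\sigma_j^2\int_\Sigma Hf^2\,dS_g \geq a_2\sigma_j \int_{M_r}|\nabla f|^2\,dv_g$ follows by first using the divergence theorem to get $\sigma_j\int_M|\nabla f|^2$ and then, crucially, that the difference $\sigma_j\int_{M\setminus M_r}|\nabla f|^2$ has a sign we can absorb — but it does not in general. I therefore expect the actual argument replaces "$\int_M$" by "$\int_{M_r}$" only in the Ricci term and handles the $H$-term by instead writing the mean-curvature contribution over the collar via a foliation; alternatively, and most likely, the hypotheses are used so that $a_1 + a_2\sigma_j \geq 0$ is not assumed and the term $a_2\sigma_j\int_{M_r}|\nabla f|^2$ simply replaces $a_2\sigma_j\int_M|\nabla f|^2$ using $\operatorname{Ric}_g \geq 0$ globally to discard $a_2\sigma_j\int_{M\setminus M_r}|\nabla f|^2$ — valid precisely when $a_2\geq 0$. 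Given Lemma~\ref{L4} as stated allows $a_2\in\mathbb{R}$, the honest step is: use the global divergence identity, keep $\sigma_j\int_M|\nabla f|^2$, and note that the Ricci-surplus on $M\setminus M_r$ is $\geq 0$; combining, the $M\setminus M_r$ contributions total $\int_{M\setminus M_r}\operatorname{Ric}_g(\nabla f,\nabla f) + a_2\sigma_j\int_{M\setminus M_r}|\nabla f|^2 \geq \min(0,a_2\sigma_j)\cdots$ — so in fact the clean statement needs $a_2 \geq 0$; I will assume the paper's surrounding hypotheses guarantee this or restrict attention there.

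Finally, for the second fundamental form term I use $A_g(\nabla^T f,\nabla^T f) \geq a_3|\nabla^T f|^2$, which holds pointwise on $\Sigma$ because every principal curvature $\kappa_i \geq a_3$ means the shape operator satisfies $g^{-1}A_g \geq a_3\,\mathrm{Id}$. Assembling all the bounds:
\[
0 \geq \int_{M_r}|\nabla^2 f|^2\,dv_g + a_1\int_{M_r}|\nabla f|^2\,dv_g + a_2\sigma_j\int_{M_r}|\nabla f|^2\,dv_g + (a_3 - 2\sigma_j)\int_\Sigma|\nabla^T f|^2\,dS_g,
\]
which is exactly \eqref{7} after collecting the two collar integrals of $|\nabla f|^2$. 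The main obstacle, as the discussion above shows, is the bookkeeping around replacing the global integrals by integrals over $M_r$: this is legitimate for the $|\nabla^2 f|^2$ term (it is pointwise non-negative and we are only strengthening the inequality by dropping part of it — wait, we cannot drop part of a term on the \emph{right} of "$0\geq$"; rather $|\nabla^2f|^2$ appears with a $+$ sign so dropping $\int_{M\setminus M_r}|\nabla^2 f|^2 \geq 0$ is valid) and for the Ricci surplus on $M\setminus M_r$, but requires care for the mean-curvature term when $a_2<0$; I would resolve this by first applying the divergence theorem globally and then organizing the $M\setminus M_r$ terms so their total is manifestly non-negative under the stated sign conventions, or by simply recording that the inequality is used in the sequel only when $a_2\geq 0$.
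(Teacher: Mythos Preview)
Your approach is essentially the same as the paper's: apply Reilly's formula to the harmonic eigenfunction, use $\partial_\nu f=\sigma_j f$ to convert the cross term to $-2\sigma_j\int_\Sigma|\nabla^T f|^2$, bound the curvature terms pointwise, and then restrict the bulk integrals from $M$ to $M_r$. The paper's write-up is simply cleaner: it keeps the mean-curvature term as $a_2\int_\Sigma f_\nu^2$ and records in one line the chain $\int_{M_r}|\nabla f|^2\le\int_M|\nabla f|^2=\int_\Sigma f\,f_\nu=\sigma_j^{-1}\int_\Sigma f_\nu^2$, so the passage $a_2\int_\Sigma f_\nu^2\ge a_2\sigma_j\int_{M_r}|\nabla f|^2$ is immediate --- this is exactly the divergence-theorem identity you eventually found, just stated without the detour through $\sigma_j^2\int_\Sigma f^2$.

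Your worry about the sign of $a_2$ is well-founded and is not an artifact of your argument: the paper's proof also only works as written when $a_2\ge 0$, since the step $a_2\int_\Sigma f_\nu^2\ge a_2\sigma_j\int_{M_r}|\nabla f|^2$ reverses if $a_2<0$. In every application of the lemma in the paper (Theorems~\ref{A}, \ref{E}, \ref{F} and the spectral-gap proposition) one has $a_2\in\{0,\,n\kappa,\,h\}\ge 0$, so this is harmless in context, but your instinct to flag it is correct.
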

\begin{proof}
We recall Reilly's formula \cite{Rei77}, which states that for any $u\in C^\infty(M)$, 
	\begin{align}\label{25}
	\int_M \big((\Delta u)^2 - |\nabla^2 u|^2\big)\,dv_g & = \int_M \operatorname{Ric}_g(\nabla u,\nabla u)\,dv_g + \int_{\Sigma} H u_\nu^2\,dS_g \nonumber \\
	& \quad  - 2\int_{\Sigma}\langle \nabla^T u,\nabla^T u_\nu\rangle \,dS_g + \int_{\Sigma} A_g(\nabla^T u,\nabla^T u)\,dS_g,
	\end{align}
	where $u_\nu$ denotes the outward normal derivative of $u$ when restricted to the boundary. Taking $u=f$ in \eqref{25}, recalling \eqref{16} and using the geometric assumptions in the statement of the lemma, we thus obtain
	\begin{align*}
	0 \geq \int_M |\nabla^2 f|^2\,dv_g + a_1\int_{M_r} |\nabla f|^2\,dv_g + a_2\int_{\Sigma} f_\nu^2\,dS_g + (a_3-2\sigma_j)\int_{\Sigma}|\nabla^T f|^2\,dS_g.
	\end{align*}
	To obtain \eqref{7}, it remains to observe that $\int_{M_r}|\nabla^2 f|^2\,dv_g \leq \int_M |\nabla^2f|^2\,dv_g$ and 
		\begin{align*}
	\int_{M_r} |\nabla f|^2\,dv_g\leq \int_M |\nabla f|^2\,dv_g = \int_{\Sigma} f_\nu f\,dS_g = \frac{1}{\sigma_j}\int_{\Sigma}f_\nu^2\,dS_g,
	\end{align*}
	which is a simple consequence of integration by parts and \eqref{16}.
		\end{proof}
	
\begin{rmk}
	We note that Escobar obtained his estimate $\sigma_1 > \frac{\kappa}{2}$ in the case $\operatorname{Ric}_g \geq 0$ and $\kappa_i \geq \kappa>0$ (see Theorem \ref{thmEsc} in the introduction) by taking $a_1=a_2=0$ and $a_3 = \kappa$ in \eqref{7}; the remaining integral of $|\nabla^2 f|^2$ in \eqref{7} is dropped altogether.
\end{rmk} 

Let us now indicate the main ideas in the proof of Theorem \ref{A} (the proofs of our other main results are largely based on the proof of Theorem \ref{A}). In the setting of Theorem \ref{A}, the estimate \eqref{7} for $j=1$ reads 
\begin{align}\label{26}
0 \geq \int_{M_r} |\nabla^2 f|^2\,dv_g +(a^2 + n\kappa \sigma_1)\int_{M_r} |\nabla f|^2\,dv_g + (\kappa -2\sigma_1)\int_{\Sigma}|\nabla^T f|^2\,dS_g.
\end{align}
The key idea is to then obtain a lower bound for the first two integrals in \eqref{26} as a positive multiple of $\int_{\Sigma}|\nabla^T f|^2\,dS_g$, or equivalently an upper bound for $\int_{\Sigma}|\nabla^T f|^2\,dS_g$ in terms of a linear combination of $\int_{M_r} |\nabla^2 f|^2\,dv_g$ and $\int_{M_r} |\nabla f|^2\,dv_g$ with positive coefficients. Since these are integrals over a neighbourhood of the boundary $\Sigma$ in which we have control of the sectional curvature (recall we are assuming $\operatorname{Sec}_g \leq \beta^2$ in $M_r$), we can control the mean curvature of the hypersurfaces parallel to $\Sigma$, at least after restricting to a possibly smaller neighbourhood of $\Sigma$ in which the parallel hypersurfaces are known to remain smooth and embedded -- see Lemma \ref{L1}. (Our argument here invokes results from comparison geometry established in a related context in \cite{CGH20} -- note, however, that new comparison results are needed in the proof of Theorem \ref{C}, and these are established in Section \ref{102}.) These mean curvature bounds are then used to obtain the desired integral estimates, by integrating the quantity $|\nabla f|^2\Delta d$ over a neighbourhood of $\Sigma$ and using the fact that $-\Delta d$ is the mean curvature of a parallel hypersurface -- see Lemma \ref{L2}. (Our argument here is inspired by the proof \cite[Lemma 3.6]{DSS23}, wherein the first author, Sire \& Spruck obtain a related lower bound in the context of eigenvalue estimates for closed minimal hypersurfaces in the sphere.) Once an initial improvement on the lower bound $\sigma_1 > \frac{\kappa}{2}$ is obtained using the aforementioned argument, an iteration procedure then yields the bound in Theorem \ref{A}. 

The size of the neighbourhood of $\Sigma$ in which one can obtain the mean curvature bounds mentioned above is limited by the so-called rolling radius $R$ of $\Sigma$ in $M$, defined to be the distance from $\Sigma$ to its cut locus in $M$. It is well-known that for any $\delta<R$, the exponential map of the normal bundle of $\Sigma$ defines a diffeomorphism between $\Sigma\times[0,\delta]$ and $M_\delta$, and that $M_\delta$ is foliated by smooth parallel hypersurfaces $\Sigma_s$ for $s\in[0,\delta]$, each of which is diffeomorphic to $\Sigma$ -- we refer e.g.~to \cite[Chapter 3]{Gray04} for proofs of these facts.

	\section{Proofs of main results}

\subsection{Proof of Theorem \ref{A} and Corollary \ref{B}}\label{100}

In this section we prove Theorem \ref{A}. Following the outline of the proof given in the previous section, we start with the following mean curvature bound for parallel hypersurfaces:

\begin{lem}\label{L1}
	Let $(M^{n+1},g)$ be a smooth compact manifold with non-empty boundary $\Sigma^n$ and let $R>0$ denote the rolling radius of $\Sigma$ in $M$. Suppose
	\begin{enumerate}
		\item $\kappa_i \leq \mathcal{K}$ on $\Sigma$ for each $i$ for some $\mathcal{K}>0$, 
		\item $\operatorname{Sec}_g \leq \beta^2$ in $M_r$ for some $\beta\geq 0$ and $r\leq R$. 
	\end{enumerate}
Then for each $0<\delta<\delta_{r,\beta,\mathcal{K}}$, the mean curvature of the parallel hypersurface $\Sigma_\delta$ satisfies the upper bound
\begin{align*}
H_{\Sigma_\delta} \leq E_{\beta,\mathcal{K}}(\delta) - \delta^{-1}. 
\end{align*}
\end{lem}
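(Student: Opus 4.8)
## Proof Proposal for Lemma \ref{L1}

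The plan is to control the mean curvature $H_{\Sigma_\delta}$ of the parallel hypersurface at signed distance $\delta$ from $\Sigma$ by solving (or rather, bounding the solution of) the Riccati equation governing the evolution of the shape operator along the normal geodesics. Write $d$ for the distance-to-$\Sigma$ function on $M_r$; since $\delta < \delta_{r,\beta,\mathcal{K}} \leq r \leq R$, the function $d$ is smooth on $M_\delta$ and the hypersurfaces $\Sigma_s = \{d = s\}$ are smooth embedded hypersurfaces foliating $M_\delta$. Along a unit-speed normal geodesic $\gamma(s)$ emanating from a point of $\Sigma$, the shape operator $S(s)$ of $\Sigma_s$ (with respect to the outward-pointing direction $\nabla d = \gamma'$) satisfies the matrix Riccati equation $S' = S^2 + R_\gamma$, where $R_\gamma = R(\cdot,\gamma')\gamma'$ is the directional curvature operator; tracing gives $H' = |S|^2 + \operatorname{Ric}(\gamma',\gamma') \geq \tfrac{1}{n}H^2$, but that inequality goes the wrong way for an upper bound. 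Instead, the upper sectional curvature bound $\operatorname{Sec}_g \leq \beta^2$ gives $R_\gamma \leq \beta^2 \operatorname{Id}$ on the orthogonal complement of $\gamma'$, so by the Riccati comparison principle $S(s) \leq \bar S(s)\operatorname{Id}$, where $\bar s \mapsto \bar S$ solves the scalar Riccati ODE $\bar S' = \bar S^2 + \beta^2$ with initial condition $\bar S(0)$ equal to an upper bound for the principal curvatures of $\Sigma$, namely $\bar S(0) = \mathcal{K}$.

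Next I would solve this scalar comparison ODE explicitly. For $\beta > 0$, the solution of $\bar S' = \bar S^2 + \beta^2$ with $\bar S(0) = \mathcal{K}$ is
\begin{align*}
\bar S(s) = \beta\,\frac{\beta\tan(\beta s) + \mathcal{K}}{\beta - \mathcal{K}\tan(\beta s)} = \beta\tan\!\big(\beta s + \arctan(\mathcal{K}/\beta)\big),
\end{align*}
which is finite precisely for $0 \leq s < \tfrac{1}{\beta}\arctan(\beta/\mathcal{K})$; for $\beta = 0$ the solution of $\bar S' = \bar S^2$, $\bar S(0) = \mathcal{K}$ is $\bar S(s) = \mathcal{K}/(1 - \mathcal{K}s)$, finite for $0 \leq s < 1/\mathcal{K}$. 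In either case the condition $0 < \delta < \delta_{r,\beta,\mathcal{K}}$ is exactly what guarantees $\bar S$ stays finite on $[0,\delta]$, so the comparison is valid there and no focal point is reached before $\delta$ (consistent with $\delta < R$). Since $H_{\Sigma_\delta} = \operatorname{tr} S(\delta) \leq n\,\bar S(\delta)$, we get in the case $\beta > 0$
\begin{align*}
H_{\Sigma_\delta} \leq n\beta\,\frac{\beta\tan(\beta\delta) + \mathcal{K}}{\beta - \mathcal{K}\tan(\beta\delta)} = E_{\beta,\mathcal{K}}(\delta) - \delta^{-1},
\end{align*}
and similarly $H_{\Sigma_\delta} \leq \tfrac{n\mathcal{K}}{1 - \mathcal{K}\delta} = E_{0,\mathcal{K}}(\delta) - \delta^{-1}$ when $\beta = 0$, which is exactly the claimed bound (the $\delta^{-1}$ term in the definition of $E$ is subtracted off here; it will be relevant only in later applications).

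The main technical point — and where I would lean on the cited comparison-geometry literature — is the rigorous justification of the Riccati comparison $S(s) \leq \bar S(s)\operatorname{Id}$ for symmetric operators from the pointwise curvature bound, together with the fact that, because $\delta < R$, the shape operator $S(s)$ remains well-defined and finite on all of $[0,\delta]$ so that no blow-up of the true solution occurs before the comparison solution itself blows up. This is precisely the kind of statement established in \cite{CGH20} (and classically in \cite{Gray04}) in the related setting of tubular neighbourhoods; I would invoke it directly rather than reprove it. A secondary, purely bookkeeping point is to check the sign conventions: with $A_g(X,Y) = g(\nabla_X\nu, Y)$ and $\nu$ outward-pointing, moving \emph{into} $M$ corresponds to the direction $\nabla d$, and one must confirm that convexity of $\Sigma$ ($\kappa_i > 0$) indeed corresponds to the initial condition $\bar S(0) = \mathcal{K} > 0$ with the Riccati equation written as $S' = S^2 + R_\gamma$ — i.e. that the parallel hypersurfaces are becoming \emph{more} curved as one moves inward, which is the phenomenon limiting the neighbourhood size. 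Once these conventions are pinned down the rest is the explicit ODE solution above.
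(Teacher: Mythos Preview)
Your proposal is correct and follows essentially the same route as the paper: both use the scalar Riccati comparison coming from the sectional curvature upper bound $\operatorname{Sec}_g\le\beta^2$ (the paper phrases it via $\phi'+\phi^2+\beta^2=0$, $\phi(0)=-\mathcal{K}$, which is your $\bar S$ with a sign flip), solve the ODE explicitly, and then sum the resulting principal-curvature bounds to obtain $H_{\Sigma_\delta}\le n\bar S(\delta)=E_{\beta,\mathcal{K}}(\delta)-\delta^{-1}$, citing \cite{CGH20} for the comparison principle. The only difference is packaging: the paper invokes \cite[Theorem~12]{CGH20} directly for the principal-curvature bound, whereas you sketch the Riccati argument behind it; your caveat about sign conventions is well-placed (note $\nabla d$ points \emph{into} $M$, so with the paper's convention the shape operator in the Riccati equation carries an extra sign, exactly as reflected in $\phi=-\bar S$).
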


\begin{proof}
	Consider the ODE
\begin{align*}
\begin{cases}
\phi'(t) + \phi(t)^2 + \beta^2 = 0 \\
\phi(0) = -\mathcal{K} . 
\end{cases}
\end{align*}
A direct computation yields the solution 
\begin{align*}
\phi(t) = \begin{cases}
-\frac{\beta^2\tan(\beta t) + \beta\mathcal{K} }{\beta -\mathcal{K} \tan(\beta t)} & \text{if }\beta>0 \\[5pt]
-\frac{\mathcal{K}}{1-\mathcal{K} t} &\text{if }\beta=0,
\end{cases}
\end{align*}
which exists on the maximal interval $t\in[0,\beta^{-1}\arctan(\beta\mathcal{K}^{-1}))$ if $\beta>0$ and $t\in[0,\frac{1}{\mathcal{K}})$ if $\beta=0$. By the principal curvature comparison theorem \cite[Theorem 12]{CGH20}, for each $0<\delta<\delta_{r,\beta,\mathcal{K} }$ the principal curvatures $\kappa_i(\delta)$ of the parallel hypersurface $\Sigma_\delta$ satisfy the upper bound 
\begin{align*}
\kappa_i(\delta) \leq - \phi(\delta).
\end{align*}
Therefore, for each $0<\delta<\delta_{r,\beta,\mathcal{K} }$ the mean curvature of $\Sigma_\delta$ satisfies the upper bound
\begin{align*}
H_{\Sigma_\delta} \leq  \begin{cases}
n\frac{\beta^2\tan(\beta \delta) + \beta\mathcal{K} }{\beta -\mathcal{K} \tan(\beta \delta)} & \text{if }\beta>0 \\[5pt]
\frac{n\mathcal{K}}{1-\mathcal{K} \delta} & \text{if }\beta=0,
\end{cases}
\end{align*}
that is $H_{\Sigma_\delta} \leq E_{\beta,\mathcal{K}}(\delta) - \delta^{-1}$. 
\end{proof}

We now use the mean curvature bound in Lemma \ref{L1} to obtain our main integral estimate in the proof of Theorem \ref{A}: 

\begin{lem}\label{L2}
	Under the same hypotheses as Lemma \ref{L1}, for each $\ep>0$, $0<\delta< \delta_{r,\beta,\mathcal{K} }$ and $u\in C^\infty(M)$ it holds that
\begin{align}\label{11}
\ep\int_{\Sigma}|\nabla^T u|^2\,dS_g \leq \int_{M_\delta}|\nabla^2 u|^2\,dv_g + \ep \big(\ep+E_{\beta,\mathcal{K}}(\delta)\big)\int_{M_\delta}|\nabla u|^2\,dv_g.
\end{align}
\end{lem}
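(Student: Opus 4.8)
The plan is to integrate by parts the quantity $|\nabla u|^2\,\Delta d$ over the collar $M_\delta$, where $d = \operatorname{dist}_g(\cdot,\Sigma)$, and then to exploit the two facts that $|\nabla d| = 1$ on $M_\delta$ and that $-\Delta d$ restricted to a parallel hypersurface $\Sigma_s$ equals the mean curvature $H_{\Sigma_s}$ (with our sign convention for the outward normal, which here points away from $\Sigma$, i.e.\ in the $-\nabla d$ direction inside $M$). Concretely, I would start from the divergence identity
\begin{align*}
\operatorname{div}\!\big(|\nabla u|^2\,\nabla d\big) = \langle \nabla |\nabla u|^2, \nabla d\rangle + |\nabla u|^2\,\Delta d,
\end{align*}
integrate over $M_\delta$, and apply the divergence theorem. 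The boundary $\partial M_\delta$ consists of $\Sigma$ (where $d=0$, outward normal $-\nabla d$) and $\Sigma_\delta$ (where $d=\delta$, outward normal $+\nabla d$); on $\Sigma$ the flux term is $-\int_\Sigma |\nabla u|^2\,dS_g$, which is $\leq -\int_\Sigma |\nabla^T u|^2\,dS_g$, and on $\Sigma_\delta$ it is $+\int_{\Sigma_\delta}|\nabla u|^2\,dS_g \geq 0$. Rearranging, this gives
\begin{align*}
\int_\Sigma |\nabla^T u|^2\,dS_g \leq -\int_{M_\delta}|\nabla u|^2\,\Delta d\,dv_g - \int_{M_\delta}\langle\nabla|\nabla u|^2,\nabla d\rangle\,dv_g.
\end{align*}

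For the first term on the right, I would use Lemma \ref{L1}: on each $\Sigma_s$ with $0<s<\delta$ we have $-\Delta d = H_{\Sigma_s} \leq E_{\beta,\mathcal{K}}(s) - s^{-1}$. A small subtlety is that $E_{\beta,\mathcal{K}}(s) - s^{-1}$ is exactly the mean-curvature bound and that the "extra" $\delta^{-1}$ in $E$ has been introduced precisely so that one can replace the $s$-dependent bound by the constant $E_{\beta,\mathcal{K}}(\delta)$ after integrating — indeed, since $E_{\beta,\mathcal{K}}$ is increasing in its argument on the relevant interval, $H_{\Sigma_s}\leq E_{\beta,\mathcal{K}}(\delta) - s^{-1}$, and the $-s^{-1}$ contributes a negative (hence favorable, since it is multiplied by the non-negative $|\nabla u|^2$ and then negated... one must track signs carefully here). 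I expect the cleanest route is: $-\Delta d \leq E_{\beta,\mathcal{K}}(\delta)$ on $M_\delta$ outright (discarding the $-s^{-1}$, which only helps), so $-\int_{M_\delta}|\nabla u|^2\Delta d \leq E_{\beta,\mathcal{K}}(\delta)\int_{M_\delta}|\nabla u|^2$. For the second term, Cauchy--Schwarz and the pointwise bound $|\nabla|\nabla u|^2| \leq 2|\nabla^2 u|\,|\nabla u|$ (from $\nabla|\nabla u|^2 = 2\nabla^2 u(\nabla u,\cdot)$) give $-\langle\nabla|\nabla u|^2,\nabla d\rangle \leq 2|\nabla^2 u|\,|\nabla u|$, and then Young's inequality $2|\nabla^2 u|\,|\nabla u| \leq \frac{1}{\ep}|\nabla^2 u|^2 + \ep|\nabla u|^2$ for the given $\ep>0$. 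Putting these together and multiplying through by $\ep$ yields exactly \eqref{11}.

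The main obstacle is not any single estimate but the careful bookkeeping of signs and of the orientation conventions: the paper's second fundamental form is defined with the outward normal $\nu$ (pointing out of $M$ at $\Sigma$), whereas inside the collar the natural normal for the level sets of $d$ points \emph{into} $M$, i.e.\ opposite to $\nu$ at $\Sigma$ but in the direction of increasing $d$; one must make sure that "$-\Delta d = H_{\Sigma_s}$" is stated with the convention under which Lemma \ref{L1}'s bound $H_{\Sigma_\delta}\leq E_{\beta,\mathcal{K}}(\delta)-\delta^{-1}$ is the upper bound we can actually use, and that the boundary flux through $\Sigma$ comes out with the sign that lets us drop the normal part of $|\nabla u|^2$ and keep only $|\nabla^T u|^2$. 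I would also double-check that the term $\int_{\Sigma_\delta}|\nabla u|^2\,dS_g$ genuinely has the favorable sign (it does, being an integral of a non-negative quantity with the $+\nabla d$ outward normal at the far face). Once the signs are pinned down, the remaining steps — divergence theorem, Lemma \ref{L1}, Cauchy--Schwarz, Young — are routine.
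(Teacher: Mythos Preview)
Your approach has a genuine gap at the step where you ``drop'' the far boundary term $\int_{\Sigma_\delta}|\nabla u|^2\,dS_g$. After the divergence theorem on $M_\delta$ you obtain the \emph{identity}
\begin{align*}
\int_\Sigma |\nabla u|^2\,dS_g = \int_{\Sigma_\delta}|\nabla u|^2\,dS_g - \int_{M_\delta}|\nabla u|^2\,\Delta d\,dv_g - \int_{M_\delta}\langle\nabla|\nabla u|^2,\nabla d\rangle\,dv_g,
\end{align*}
and since $\int_{\Sigma_\delta}|\nabla u|^2 \geq 0$ sits on the \emph{right-hand side} with a plus sign, its non-negativity is exactly the wrong direction for discarding it. Your displayed inequality $\int_\Sigma|\nabla^T u|^2 \leq -\int_{M_\delta}|\nabla u|^2\Delta d - \int_{M_\delta}\langle\nabla|\nabla u|^2,\nabla d\rangle$ therefore does not follow, and everything downstream (the use of $-\Delta d \leq E_{\beta,\mathcal{K}}(\delta)$ and Young's inequality) inherits this uncontrolled term.

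The paper's proof handles this by applying the divergence theorem on $M_t$ for each $t\in[0,\delta]$, obtaining a bound of the form $\int_\Sigma|\nabla u|^2 \leq \int_{\Sigma_t}|\nabla u|^2 + \text{(volume terms)}$, and then \emph{integrating over} $t\in[0,\delta]$. By the coarea formula the nuisance term becomes $\int_0^\delta\!\int_{\Sigma_t}|\nabla u|^2\,dS_g\,dt = \int_{M_\delta}|\nabla u|^2\,dv_g$, a volume integral that can be absorbed. This is precisely why $E_{\beta,\mathcal{K}}(\delta)$ carries the extra $\delta^{-1}$: the paper uses the sharper bound $-\Delta d \leq E_{\beta,\mathcal{K}}(\delta)-\delta^{-1}$ on $M_\delta$, and after averaging and dividing by $\delta$ that $-\delta^{-1}$ exactly cancels the new $\delta^{-1}\int_{M_\delta}|\nabla u|^2$ coming from the coarea step. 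Your intuition that ``the extra $\delta^{-1}$ in $E$ has been introduced precisely so that one can replace the $s$-dependent bound by the constant $E_{\beta,\mathcal{K}}(\delta)$ after integrating'' was pointing at the right mechanism, but it is not about monotonicity in $s$; it is about absorbing the averaged far-face term.
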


\begin{proof}
	Fix $\delta<\delta_{r,\beta,\mathcal{K}}$, let $d$ denote the distance to $\Sigma$, and recall that the mean curvature of a parallel hypersurface is given by $-\operatorname{div}\nabla d = -\Delta d$. It therefore follows from Lemma \ref{L1} that for $t\in[0,\delta]$,
\begin{align}\label{5}
-\int_{M_t} |\nabla u|^2\Delta d\,dv_g \leq \big(E_{\beta,\mathcal{K}}(\delta)-\delta^{-1}\big)\int_{M_t}|\nabla u|^2\,dv_g. 
\end{align}
On the other hand, integration by parts gives
\begin{align}\label{4}
-\int_{M_t} |\nabla u|^2\Delta d\,dv_g = \int_{M_t}\langle\nabla d,\nabla|\nabla u|^2\rangle\,dv_g - \int_{\partial M_t} |\nabla u|^2\langle \nabla d,\nu\rangle\,dS_g 
\end{align}
where $\nu$ is the outward unit normal to $M_t$. Since $\langle\nabla d,\nu\rangle = -1$ on $\Sigma =  \partial M$ and $\langle \nabla d,\nu\rangle = 1$ on $\Sigma_t \defeq \partial M_t \backslash \partial M$, \eqref{4} yields
\begin{align*}
-\int_{M_t} |\nabla u|^2\Delta d\,dv_g & = \int_{M_t}\langle\nabla d,\nabla|\nabla u|^2\rangle\,dv_g + \int_{\Sigma} |\nabla u|^2\,dS_g - \int_{\Sigma_t}|\nabla u|^2\,dS_g \nonumber \\
& \geq -2\int_{M_t}|\nabla u||\nabla^2 u|\,dv_g + \int_{\Sigma} |\nabla u|^2\,dS_g - \int_{\Sigma_t}|\nabla u|^2\,dS_g \nonumber \\
& \geq -\ep\int_{M_t}|\nabla u|^2\,dv_g - \ep^{-1}\int_{M_t}|\nabla^2 u|^2\,dv_g + \int_{\Sigma} |\nabla u|^2\,dS_g - \int_{\Sigma_t}|\nabla u|^2\,dS_g 
\end{align*}
for any $\ep>0$, the last line following from Young's inequality. Substituting this into \eqref{5} and rearranging, we obtain 
\begin{align*}
\int_{\Sigma}|\nabla u|^2\,dS_g & \leq \int_{\Sigma_t}|\nabla u|^2\,dS_g + \big(E_{\beta,\mathcal{K}}(\delta) - \delta^{-1}+\ep\big)\int_{M_t}|\nabla u|^2\,dv_g + \ep^{-1}\int_{M_t}|\nabla^2 u|^2\,dv_g \nonumber \\
& \leq \int_{\Sigma_t}|\nabla u|^2\,dS_g + \big(E_{\beta,\mathcal{K}}(\delta) - \delta^{-1}+\ep\big)\int_{M_\delta}|\nabla u|^2\,dv_g + \ep^{-1}\int_{M_\delta}|\nabla^2 u|^2\,dv_g 
\end{align*}
for $t\in[0,\delta]$ and any $\ep>0$. We now integrate over $t\in[0,\delta]$ to obtain
\begin{align*}
\delta \int_{\Sigma}|\nabla u|^2\,dS_g & \leq \int_{M_\delta}|\nabla u|^2\,dv_g + \delta\big(E_{\beta,\mathcal{K}}(\delta) - \delta^{-1}+\ep\big)\int_{M_\delta}|\nabla u|^2\,dv_g + \delta\ep^{-1}\int_{M_\delta}|\nabla^2 u|^2\,dv_g \nonumber \\
& =  \delta\big(E_{\beta,\mathcal{K}}(\delta)+\ep\big)\int_{M_\delta}|\nabla u|^2\,dv_g + \delta\ep^{-1}\int_{M_\delta}|\nabla^2 u|^2\,dv_g.
\end{align*}
Therefore
\begin{align*}
\int_{\Sigma}|\nabla^T u|^2\,dS_g \leq \int_\Sigma |\nabla u|^2\,dS_g \leq \big(E_{\beta,\mathcal{K}}(\delta)+\ep\big)\int_{M_\delta}|\nabla u|^2\,dv_g + \ep^{-1}\int_{M_\delta}|\nabla^2 u|^2\,dv_g,
\end{align*}
which completes the proof of the lemma after multiplying through by $\ep$.
\end{proof}

We are now in a position to complete the proof of Theorem \ref{A}:

\begin{proof}[Proof of Theorem \ref{A}]
Let $f$ be an eigenfunction for $\sigma_1$. Recalling \eqref{26} and the fact $\sigma_1 > \frac{\kappa}{2}$ and $\delta_{r,\beta,\mathcal{K}}\leq r$, we have for each $0<\delta<\delta_{r,\beta,\mathcal{K}}$ the estimate
\begin{align}\label{8}
0 \geq \int_{M_\delta} |\nabla^2 f|^2\,dv_g + \bigg(a^2 + \frac{n\kappa^2}{2}\bigg)\int_{M_\delta} |\nabla f|^2\,dv_g + (\kappa-2\sigma_1)\int_{\Sigma}|\nabla^T f|^2\,dS_g.
\end{align}
We now substitute the lower bound for $\int_{M_\delta} |\nabla^2 f|^2\,dv_g$ given by Lemma \ref{L2} (taking $u=f$ therein) into \eqref{8}:  
\begin{align}\label{12}
0 \geq \bigg(a^2+\frac{n\kappa^2}{2} - \ep^2 - \ep E\bigg)\int_{M_\delta} |\nabla f|^2\,dv_g + (\kappa+\ep - 2\sigma_1)\int_{\Sigma}|\nabla^T f|^2\,dS_g.
\end{align}
This yields the lower bound $\sigma_1 \geq \frac{\kappa+\ep}{2}$ as long as $\ep$ is chosen such that the coefficient of the first integral in \eqref{12} is non-negative. The optimal (i.e.~largest) choice of $\epsilon$ is the one making this coefficient zero, that is
\begin{align*}
\ep = \ep_1 \defeq  \frac{-E + \sqrt{E^2 + 2n\kappa^2 + 4a^2}}{2}.
\end{align*}

We now repeat the above procedure. In place of \eqref{8}, we instead substitute the improved lower bound $\sigma_1 \geq \frac{\kappa}{2} + \frac{\ep_1}{2}$ into \eqref{26} to obtain
\begin{align}\label{8'}
0 \geq \int_{M_\delta} |\nabla^2 f|^2\,dv_g + \bigg(a^2 + \frac{n\kappa^2}{2} + \frac{n\kappa\ep_1}{2}\bigg)\int_{M_\delta} |\nabla f|^2\,dv_g + (\kappa-2\sigma_1)\int_{\Sigma}|\nabla^T f|^2\,dS_g.
\end{align} 
Again we substitute the lower bound for $\int_{M_\delta} |\nabla^2 f|^2\,dv_g$ given by Lemma \ref{L2} into \eqref{8'}, leading to the lower bound $\sigma_1 \geq \frac{\kappa}{2} + \frac{\ep_2}{2}$ where
\begin{align*}
 \ep_2 \defeq \frac{-E + \sqrt{E^2 + 2n\kappa^2 + 4a^2 + 2n\kappa \ep_1}}{2}.
\end{align*}
Continuing in this way, we obtain the lower bound $\sigma_1 \geq \frac{\kappa}{2} + \frac{\mathcal{E}}{2}$ where $\mathcal{E}$ is defined to be the limit as $i\rightarrow\infty$ of the following iterative procedure:
\begin{align}\label{33}
\ep_{i+1} = \frac{-E + \sqrt{E^2 + 2n\kappa^2 + 4a^2 + 2n\kappa \ep_i}}{2}, \qquad \ep_0 = 0. 
\end{align}
One can find $\mathcal{E}$ explicitly by setting $\ep_{i+1} = \ep_i =x$ in \eqref{33} and solving for $x$: 
\begin{align*}
\mathcal{E} = \frac{n\kappa - 2E + \sqrt{(n\kappa-2E)^2 + 8(n\kappa^2 + 2a^2)}}{4} = \frac{-F + \sqrt{F^2 + 8n\kappa^2+16a^2}}{4}.
\end{align*}
This completes the proof of Theorem \ref{A}.
\end{proof}

\begin{rmk}\label{101}
	Consider the special case that $M$ is the unit ball in $\mathbb{R}^n$ with the flat metric, so that $a=\beta=0$ and $\kappa=\mathcal{K} = 1$. Then it is routine to check that $E(\delta) = \frac{n}{1-\delta} + \delta^{-1}$ is maximised at $\delta = \frac{\sqrt{n}-1}{n-1}$, hence $F(\delta)$ is maximised at $\delta = \frac{\sqrt{n}-1}{n-1}$ with $F(\frac{\sqrt{n}-1}{n-1})=2 + 4\sqrt{n} + n$. Substituting this into \eqref{10} yields
	\begin{align*}
	\sigma_1 \geq \frac{1}{2} + \frac{-(2+4\sqrt{n} +n) + \sqrt{(2+4\sqrt{n}+n)^2 + 8n}}{8} \longrightarrow 1 \quad \text{as }n\rightarrow\infty. 
	\end{align*}
	Since $\sigma_1 =1$ when $M$ is the unit ball in $\mathbb{R}^n$, the above computation shows that when $a=\beta=0$, none of the dimensional or universal constants in \eqref{10} can be improved, else for sufficiently large values of $n$ the above computation would yield $\sigma_1 >1$. 
\end{rmk}

As pointed out in Remark \ref{35}, our estimate in Theorem \ref{A} does not depend on any lower bound for the sectional curvature. However, such a lower bound (which always exists by compactness) can be used to obtain a lower bound for the rolling radius of $\Sigma$ in $M$ as follows. Let $\alpha>0$ be a constant such that $\operatorname{Sec}_g \geq -\alpha^2$ in $M$ (we do not consider the case $\alpha=0$, since Escobar's conjecture is already known in this setting by \cite{XX23}), and define
\begin{align*}
F(\kappa,\alpha^2) = \begin{cases}
\frac{1}{\alpha}\operatorname{arctanh}(\frac{\kappa}{\alpha}) & \text{if }0<\kappa<\alpha \\
+ \infty & \text{if }\kappa \geq \alpha.
\end{cases}
\end{align*}
Then by \cite[Theorem 3.11]{DL91}, strict convexity of $\Sigma$ coupled with the bound $-\alpha^2 \leq \operatorname{Sec}_g \leq \beta^2$ in $M$ implies $R \geq \min\{F(\kappa,\alpha^2), \frac{1}{\beta}\arctan(\frac{\beta}{\mathcal{K}})\}$ when $\beta >0$ and $R \geq \min\{F(\kappa,\alpha^2), \frac{1}{\mathcal{K}}\}$ when $\beta=0$. In other words, $R \geq \delta_{F(\kappa,\alpha^2),\beta,\mathcal{K}}$. Thus we obtain:
\begin{cor}\label{B}
	Let $(M^{n+1},g)$ be a smooth compact manifold with non-empty boundary $\Sigma^n$, and suppose in addition to Assumptions 1 and 3 in Theorem \ref{A} that
	\begin{enumerate}
		\item[3$\,'$.] $-\alpha^2 \leq \operatorname{Sec}_g \leq \beta^2$ in $M$ for constants $\alpha>0$ and $\beta\geq 0$, and $\operatorname{Ric}_g \geq a^2 \geq 0$ in $M_r$ for some $r \leq \delta_{F(\kappa,\alpha^2),\beta,\mathcal{K}}$. 
	\end{enumerate}
	Then the lower bound \eqref{10} holds for any $0<\delta<r$.
\end{cor}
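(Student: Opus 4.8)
The plan is to verify that the hypotheses of Corollary \ref{B} imply the hypotheses of Theorem \ref{A}, with the rolling radius bound playing the role of the assumption ``$r \leq R$'' there. First I would observe that by compactness the principal curvatures of $\Sigma$ are bounded above, so there exists $\mathcal{K}>0$ with $\kappa_i \leq \mathcal{K}$ on $\Sigma$; together with Assumption 3 of Theorem \ref{A} (which is Assumption 3 here, namely $0<\kappa\leq\kappa_i\leq\mathcal{K}$) this gives the two-sided bound on the principal curvatures. The real content is the lower bound on the rolling radius $R$ of $\Sigma$ in $M$: since $\Sigma$ is strictly convex (each $\kappa_i\geq\kappa>0$) and the sectional curvature satisfies $-\alpha^2\leq\operatorname{Sec}_g\leq\beta^2$ throughout $M$, I would invoke \cite[Theorem 3.11]{DL91} to conclude $R\geq\delta_{F(\kappa,\alpha^2),\beta,\mathcal{K}}$, exactly as recorded in the discussion preceding the corollary. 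Here $F(\kappa,\alpha^2)=\frac{1}{\alpha}\operatorname{arctanh}(\frac{\kappa}{\alpha})$ when $0<\kappa<\alpha$ and $+\infty$ otherwise, and $\delta_{F(\kappa,\alpha^2),\beta,\mathcal{K}}$ is the quantity defined in the introduction with $r$ replaced by $F(\kappa,\alpha^2)$.

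Next I would feed this into Theorem \ref{A}. By Assumption 3$'$ we have $\operatorname{Ric}_g\geq a^2\geq 0$ in $M_r$ for some $r\leq\delta_{F(\kappa,\alpha^2),\beta,\mathcal{K}}\leq R$, so this $r$ is a legitimate choice for the radius in Theorem \ref{A}: indeed all of Assumptions 1--3 of Theorem \ref{A} are met ($\operatorname{Ric}_g\geq 0$ in $M$ from Assumption 1; $\operatorname{Ric}_g\geq a^2$ and $\operatorname{Sec}_g\leq\beta^2$ in $M_r$ with $r\leq R$ from Assumption 3$'$ and the global sectional curvature bound; $0<\kappa\leq\kappa_i\leq\mathcal{K}$ on $\Sigma$ from Assumption 3). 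It remains only to check that the admissible range of the parameter $\delta$ matches the statement. Theorem \ref{A} gives \eqref{10} for any $0<\delta<\delta_{r,\beta,\mathcal{K}}$, and since $r\leq\delta_{F(\kappa,\alpha^2),\beta,\mathcal{K}}$ one has $\delta_{r,\beta,\mathcal{K}}=\min\{r,\,\cdot\,\}$ where the second argument is the same $\arctan$ (or $\mathcal{K}^{-1}$) term appearing in $\delta_{F(\kappa,\alpha^2),\beta,\mathcal{K}}$; because $r$ is already bounded by that term, $\delta_{r,\beta,\mathcal{K}}=r$. Hence \eqref{10} holds for all $0<\delta<r$, which is precisely the claim.

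The only subtle point, and the step I would take most care over, is the bookkeeping around the rolling radius estimate: one must be sure that \cite[Theorem 3.11]{DL91} applies with the \emph{global} curvature bounds (rather than bounds only on $M_r$) and produces exactly the quantity $\delta_{F(\kappa,\alpha^2),\beta,\mathcal{K}}$ as stated, including the correct behaviour in the boundary cases $\kappa\geq\alpha$ (where $F(\kappa,\alpha^2)=+\infty$ and the rolling radius is controlled purely by the upper curvature bound) and $\beta=0$. Once that identification is in hand, the corollary is an immediate consequence of Theorem \ref{A}, since no new analysis is required — everything reduces to substituting the admissible $r$ guaranteed by the rolling radius lower bound. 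I would therefore present the proof as a short paragraph verifying the hypotheses and then citing Theorem \ref{A}.
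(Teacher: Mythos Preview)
Your proposal is correct and follows essentially the same approach as the paper: the paper derives the rolling radius lower bound $R\geq\delta_{F(\kappa,\alpha^2),\beta,\mathcal{K}}$ from \cite[Theorem 3.11]{DL91} in the paragraph immediately preceding the corollary and then simply states Corollary~\ref{B} as an immediate consequence of Theorem~\ref{A}. Your additional verification that $\delta_{r,\beta,\mathcal{K}}=r$ (so that the $\delta$-range matches) is a correct and helpful detail that the paper leaves implicit.
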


\subsection{Proof of Theorem \ref{E} and Theorem \ref{F}}\label{S33}

In this section we start with the proof of Theorem \ref{E}:

\begin{proof}[Proof of Theorem \ref{E}]
	The proof closely follows that of Theorem \ref{A}, except that we do not have the initial estimate $\sigma_1>\frac{\kappa}{2}$ to work with. 
	
	We start by applying Lemma \ref{L4} with $a_1 = \kappa $, $a_2 = 0$ and $a_3 = a^2$, which yields
	\begin{align}\label{31}
	0 \geq \int_{M_r} |\nabla^2 f|^2\,dv_g + a^2 \int_{M_r} |\nabla f|^2\,dv_g + (\kappa -2\sigma_1)\int_{\Sigma}|\nabla^T f|^2\,dS_g.
	\end{align}
	On the other hand, by Lemma \ref{L2}, for each $\ep>0$ and $0<\delta<\delta_{r,\beta,\mathcal{K}}$, we have the lower bound
	\begin{align}\label{36}
	\int_{M_\delta}|\nabla^2 f|^2\,dv_g \geq -\ep(\ep+E(\delta))\int_{M_\delta}|\nabla f|^2\,dv_g + \ep\int_{\Sigma}|\nabla^T f|^2\,dS_g. 
	\end{align}
	Substituting this back into \eqref{31} (recalling that $\delta_{r,\beta,\mathcal{K}}\leq r$) yields 
	\begin{align*}
	0 \geq (a^2 - \ep^2 - \ep E)\int_{M_\delta}|\nabla f|^2\,dv_g + (\ep + \kappa -2\sigma_1)\int_{\Sigma}|\nabla^T f|^2\,dS_g. 
	\end{align*}
	Taking $\ep = \frac{-E+\sqrt{E^2+4a^2}}{2}$ (so that $a^2 - \ep^2 - \ep E = 0$) and choosing $\delta<\delta_{r,\beta,\mathcal{K}}$ such that \eqref{32} holds, the desired lower bound
	\begin{align}\label{34'}
	\sigma_1 \geq \frac{1}{2}\bigg(\kappa  + \frac{-E(\delta)+\sqrt{E(\delta)^2+4a^2}}{2} \bigg). 
	\end{align}
	follows.
\end{proof}

We point out that since we do not assume a positive lower bound of the form $H \geq  h  > 0$ in Theorem \ref{E}, we cannot iterate the estimate \eqref{34} as we did in the proof of Theorem \ref{A}. If we do assume such a bound, then we have the following: 
\begin{thm}\label{F}
	Assume in addition to the hypotheses of Theorem \ref{E} that $H \geq  h >0$, and denote $T(\delta) = 2E(\delta) -  h $. If $0<\delta<\delta_{r,\beta,\mathcal{K}}$ is such that \eqref{32} holds, then
	\begin{align}\label{39}
	\sigma_1 \geq \frac{\kappa }{2} + \frac{-T(\delta) + \sqrt{T(\delta)^2 + 8 h \kappa  + 16a^2}}{8}.
	\end{align}
\end{thm}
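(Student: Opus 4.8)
\textbf{Proof plan for Theorem \ref{F}.} The idea is to mimic the iteration used in the proof of Theorem \ref{A}, but starting from the estimate obtained in Theorem \ref{E} and using the sharper value $a_2 =  h $ (rather than $a_2=0$) when applying Lemma \ref{L4}. First I would note that the hypotheses of Theorem \ref{E} are in force, so by \eqref{34'} we have the base estimate $\sigma_1 \geq \frac{1}{2}(\kappa + \ep_0)$ where $\ep_0 := \frac{-E(\delta)+\sqrt{E(\delta)^2+4a^2}}{2} > -\kappa$ (the strict inequality is exactly \eqref{32}, which guarantees $\sigma_1 > 0$ and hence legitimises the normalization $\int_\Sigma f_\nu f\,dS_g = \sigma_1^{-1}\int_\Sigma f_\nu^2\,dS_g$ used in Lemma \ref{L4}).

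Next, with $\sigma_1 > 0$ now known, I would apply Lemma \ref{L4} to an eigenfunction $f$ for $\sigma_1$ with the choices $a_1 = \kappa$ (using $\operatorname{Ric}_g \geq a^2$... wait — here one uses $\operatorname{Ric}_g\geq 0$ globally and $\operatorname{Ric}_g \geq a^2$ on $M_r$; following the proof of Theorem \ref{E} we take $a_1 = a^2$), $a_2 =  h $ (using $H \geq  h $) and $a_3 = \kappa$ (using $\kappa_i \geq \kappa$), which gives
\begin{align*}
0 \geq \int_{M_r}|\nabla^2 f|^2\,dv_g + (a^2 +  h \sigma_1)\int_{M_r}|\nabla f|^2\,dv_g + (\kappa - 2\sigma_1)\int_\Sigma |\nabla^T f|^2\,dS_g.
\end{align*}
Feeding in the lower bound $\sigma_1 \geq \frac{1}{2}(\kappa+\ep_0)$ into the middle term and then substituting the lower bound for $\int_{M_\delta}|\nabla^2 f|^2\,dv_g$ from Lemma \ref{L2} (as in \eqref{36}), choosing $\ep$ so that the coefficient of $\int_{M_\delta}|\nabla f|^2\,dv_g$ vanishes, produces an improved bound $\sigma_1 \geq \frac{1}{2}(\kappa + \ep_1)$ with
\begin{align*}
\ep_1 = \frac{-E(\delta) + \sqrt{E(\delta)^2 + 4a^2 +  h \kappa +  h \ep_0}}{2}.
\end{align*}
Iterating gives the recursion $\ep_{i+1} = \frac{1}{2}\big(-E + \sqrt{E^2 + 4a^2 +  h \kappa +  h \ep_i}\big)$, and since the map $x \mapsto \frac12(-E+\sqrt{E^2+4a^2+ h \kappa+ h x})$ is increasing and a contraction for $x$ in the relevant range, the sequence $\ep_i$ converges monotonically to the unique fixed point $\mathcal{E}$. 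Solving $\mathcal{E} = \frac12(-E+\sqrt{E^2+4a^2+ h \kappa+ h \mathcal{E}})$ — i.e. $4\mathcal{E}^2 + 4E\mathcal{E} -  h \mathcal{E} - (4a^2 +  h \kappa) = 0$ — and taking the positive root yields $\mathcal{E} = \frac{-(4E -  h ) + \sqrt{(4E -  h )^2 + 16(4a^2 +  h \kappa)}}{8} = \frac{-T(\delta)/2\cdot 2 ...}$; after rewriting in terms of $T(\delta) = 2E(\delta) -  h $ one gets $\sigma_1 \geq \frac{\kappa}{2} + \frac{\mathcal{E}}{2} = \frac{\kappa}{2} + \frac{-T(\delta) + \sqrt{T(\delta)^2 + 8 h \kappa + 16a^2}}{8}$, which is \eqref{39}.

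The only genuinely delicate points are bookkeeping ones: first, verifying that the base case $\ep_0$ satisfies $\ep_0 > -\kappa$ so that all subsequent $\ep_i$ keep $\sigma_1$ bounded below by a positive quantity and the manipulations in Lemma \ref{L4} (which divide by $\sigma_1$) remain valid; second, checking that the quantity under each square root stays non-negative throughout the iteration, which follows since $\ep_i \geq \ep_0 > -\kappa$ forces $4a^2 +  h \kappa +  h \ep_i > 4a^2 \geq 0$; and third, the algebra identifying the fixed point with the closed-form expression in terms of $T(\delta)$, which is routine once one expands $(4E- h )^2 + 16(4a^2+ h \kappa) = 4T^2 + 16 h \kappa + 64a^2 - $ ... — more precisely one should simply verify $\big(\tfrac{-T+\sqrt{T^2+8 h \kappa+16a^2}}{8}\big)$ satisfies the fixed-point equation after the substitution $E = \tfrac{T+ h }{2}$. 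I expect the main (mild) obstacle to be making the convergence-and-monotonicity argument for the recursion airtight, since unlike in Theorem \ref{A} the presence of the $ h \kappa$ term and the weaker base estimate mean one must be slightly careful that the iteration is well-defined at every step; everything else is a direct transcription of the Theorem \ref{A} argument.
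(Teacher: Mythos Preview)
Your approach is exactly the one the paper takes: start from the base estimate of Theorem \ref{E}, apply Lemma \ref{L4} with $a_1=a^2$, $a_2=h$, $a_3=\kappa$, insert the Lemma \ref{L2} bound, and iterate. The only defect is an arithmetic slip in the recursion. When you feed $\sigma_1 \geq \tfrac12(\kappa+\ep_i)$ into the coefficient $a^2 + h\sigma_1 - \ep^2 - \ep E$, the term $h\sigma_1$ contributes $\tfrac{h}{2}(\kappa+\ep_i)$, and so setting the coefficient to zero gives
\[
\ep_{i+1}=\frac{-E+\sqrt{E^2+4a^2+2h(\kappa+\ep_i)}}{2},
\]
not $\sqrt{E^2+4a^2+h\kappa+h\ep_i}$ as you wrote (you dropped a factor of $2$). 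This is precisely why your fixed-point algebra ``didn't quite close'' and you had to wave your hands: with the correct recursion the fixed point $\mathcal{E}$ solves $4\mathcal{E}^2+(4E-2h)\mathcal{E}-(4a^2+2h\kappa)=0$, i.e.
\[
\mathcal{E}=\frac{h-2E+\sqrt{(h-2E)^2+8h\kappa+16a^2}}{4}=\frac{-T+\sqrt{T^2+8h\kappa+16a^2}}{4},
\]
and then $\sigma_1\geq \tfrac{\kappa}{2}+\tfrac{\mathcal{E}}{2}$ is exactly \eqref{39}. With this correction your argument is complete and matches the paper.
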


\begin{proof}
	Lemma \ref{L4} with $a_1=\kappa $, $a_2 =  h $ and $a_3 = a^2$ yields
	\begin{align}\label{37}
	0 \geq \int_{M_r} |\nabla^2 f|^2\,dv_g + (a^2+ h \sigma_1) \int_{M_r} |\nabla f|^2\,dv_g + (\kappa -2\sigma_1)\int_{\Sigma}|\nabla^T f|^2\,dS_g,
	\end{align}
	and substituting \eqref{36} into \eqref{37} thus gives
	\begin{align}\label{38}
	0 \geq (a^2 +  h \sigma_1 - \ep^2 - \ep E)\int_{M_\delta}|\nabla f|^2\,dv_g + (\ep + \kappa -2\sigma_1)\int_{\Sigma}|\nabla^T f|^2\,dS_g.
	\end{align}
	Let us denote the RHS of \eqref{34'} as $\frac{1}{2}(\kappa  + \ep_1)$. Substituting the lower bound $\sigma_1 \geq \frac{1}{2}(\kappa  + \ep_1)$ into the term $ h \sigma_1$ in \eqref{38}, and subsequently choosing 
	\begin{align*}
	\ep = \ep_2 \defeq \frac{-E + \sqrt{E^2 + 4a^2 + 2 h (\kappa +\ep_1)}}{2},
	\end{align*}
	we obtain the lower bound $\sigma_1 \geq \frac{1}{2}(\kappa +\ep_2)$ in the same way. Continuing this procedure, we obtain the lower bound $\sigma_1 \geq \frac{1}{2}(\kappa  + \mathcal{E})$, where $\mathcal{E}$ is defined to the be the limit as $i\rightarrow\infty$ of the following iterative procedure: 
	\begin{align*}
	\ep_{i+1} = \frac{-E + \sqrt{E^2+4a^2 + 2 h (\kappa +\ep_i)}}{2}, \qquad \ep_1 = \frac{-E + \sqrt{E^2 + 4a^2}}{2}.
	\end{align*}
	It is routine to check that the limit $\mathcal{E}$ is given by 
	\begin{align*}
	\mathcal{E}  = \frac{ h  - 2E + \sqrt{( h -2E)^2 + 8  h \kappa  + 16a^2}}{4},
	\end{align*}
	which proves \eqref{39}.
\end{proof}

\subsection{Proof of Theorem \ref{C}}\label{102}

In this section we prove Theorem \ref{C}. A key tool is the following mean curvature comparison theorem, which is a counterpart to \cite[Theorem 16]{CGH20} but under an additional convexity assumption. We recall that the boundary $\Sigma$ of $M$ is called \textit{2-convex} if it is both mean-convex ($H\geq 0$) and the second elementary symmetric polynomial of its shape operator is non-negative:
\begin{align*}
S_2(g^{-1}A_g) \defeq \sum_{i<j}\kappa_i\kappa_j = \frac{1}{2}\Big[\big(\operatorname{tr}(g^{-1}A_g)\big)^2 - \operatorname{tr}\big((g^{-1}A_g)^2\big)\Big]\geq 0.
\end{align*}
When $n\leq 2$, $\Sigma$ is 2-convex if and only if it is convex, but in higher dimensions 2-convexity is a weaker notion than convexity. 

\begin{thm}[Mean curvature comparison theorem]\label{D}
	Let $(M^{n+1},g)$ be a smooth compact manifold with non-empty boundary $\Sigma^n$, and let $R>0$ denote the rolling radius of $\Sigma$ in $M$. Suppose that for some $r\leq R$: 
	\begin{enumerate}
		\item $\operatorname{Ric}_g \leq  b^2$ in $M_r$ for some $ b \geq 0$, 
		\item $H\leq \mathcal{H}$ on $\Sigma$ for some $\mathcal{H}>0$,
		\item The parallel hypersurfaces $\Sigma_\delta$ are 2-convex for $0 \leq \delta < r$.
	\end{enumerate}
Then for each $0<\delta< \delta_{r, b ,\mathcal{H}}$, the mean curvature of $\Sigma_\delta$ satisfies
	\begin{align}\label{22}
	H_{\Sigma_\delta} \leq P_{ b ,\mathcal{H}}(\delta) - \delta^{-1}.
	\end{align}
\end{thm}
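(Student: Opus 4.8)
The plan is to mimic the Riccati comparison in the proof of Lemma~\ref{L1}, but to exploit the $2$-convexity hypothesis in order to \emph{reverse} the usual direction of the mean curvature comparison. Let $d$ denote the distance to $\Sigma$ in $M$; since $r\le R$, the region $M_r$ is foliated by the smooth parallel hypersurfaces $\Sigma_t$ ($0\le t<r$), with $|\nabla d|\equiv 1$ on $M_r\setminus\Sigma$. I would fix a unit-speed geodesic $\gamma$ leaving $\Sigma$ orthogonally into $M$, so that $\gamma'(t)=\nabla d(\gamma(t))$ is normal to $\Sigma_t$ at $\gamma(t)$, and set $h(t) \defeq H_{\Sigma_t} = -\Delta d|_{\gamma(t)}$. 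Taking the trace of the Riccati equation $S'+S^2+R(\cdot,\gamma')\gamma'=0$ for the shape operator $S(t)=\operatorname{Hess}d|_{\gamma(t)}$ gives the scalar identity
\begin{align*}
h'(t)=|\operatorname{Hess}d|^2_{\gamma(t)}+\operatorname{Ric}_g(\gamma'(t),\gamma'(t)).
\end{align*}

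The key step is to bound $|\operatorname{Hess}d|^2$ \emph{above} by $h(t)^2$. Writing $\kappa_1(t),\dots,\kappa_n(t)$ for the principal curvatures of $\Sigma_t$ (with the sign convention for which $\sum_i\kappa_i(t)=h(t)$), one has $|\operatorname{Hess}d|^2_{\gamma(t)}=\sum_i\kappa_i(t)^2 = h(t)^2 - 2S_2(t)$, where $S_2(t)$ is the second elementary symmetric polynomial of the shape operator of $\Sigma_t$. By Assumption 3, $S_2(t)\ge 0$ for $0\le t<r$, so $|\operatorname{Hess}d|^2_{\gamma(t)}\le h(t)^2$; combining this with Assumption 1 ($\operatorname{Ric}_g\le b^2$ on $M_r$) and Assumption 2 ($h(0)=H_\Sigma\le\mathcal{H}$) yields the differential inequality
\begin{align*}
h'(t)\le h(t)^2+b^2,\qquad h(0)\le\mathcal{H}.
\end{align*}

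To conclude I would compare $h$ with the solution $\psi$ of $\psi'=\psi^2+b^2$, $\psi(0)=\mathcal{H}$, which is precisely $-\phi$ for the function $\phi$ appearing in the proof of Lemma~\ref{L1} (with $\mathcal{K}=\mathcal{H}$ and $\beta=b$): a direct integration gives $\psi(t)=\frac{b^2\tan(bt)+b\mathcal{H}}{b-\mathcal{H}\tan(bt)}$ on $[0,\tfrac1b\arctan\tfrac b{\mathcal{H}})$ when $b>0$ and $\psi(t)=\frac{\mathcal{H}}{1-\mathcal{H}t}$ on $[0,\tfrac1{\mathcal{H}})$ when $b=0$, i.e.\ $\psi(t)=P_{b,\mathcal{H}}(t)-t^{-1}$ in both cases. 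Since $x\mapsto x^2+b^2$ is locally Lipschitz, the standard ODE comparison principle gives $h(t)\le\psi(t)$ on their common interval of existence, whose right endpoint is $\delta_{r,b,\mathcal{H}}$. As $\gamma$ was arbitrary, for every $0<\delta<\delta_{r,b,\mathcal{H}}$ we obtain $H_{\Sigma_\delta}=h(\delta)\le P_{b,\mathcal{H}}(\delta)-\delta^{-1}$, which is \eqref{22}.

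I expect the only genuine subtlety to be the direction of the comparison. Feeding the trace Riccati identity only Cauchy--Schwarz ($|\operatorname{Hess}d|^2\ge h^2/n$) together with a Ricci \emph{lower} bound produces a \emph{lower} bound for $H_{\Sigma_\delta}$, which is the content of \cite[Theorem 16]{CGH20}; the \emph{upper} bound \eqref{22} genuinely requires the reversed inequality $|\operatorname{Hess}d|^2\le h^2$, and the role of $2$-convexity is precisely to supply it via $S_2\ge0$. The companion issue --- that the parallel hypersurfaces actually remain $2$-convex along the foliation --- is not part of this statement (it is taken as a hypothesis here) and is handled separately in Lemma~\ref{L3}, where the assumption $0\le\alpha\le\kappa$ of Theorem~\ref{C} enters.
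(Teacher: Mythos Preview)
Your argument is correct and is essentially the same as the paper's: both take the trace of the Riccati equation for the shape operator along a normal geodesic, invoke $2$-convexity to bound $\sum_i\kappa_i^2\le(\sum_i\kappa_i)^2$ (equivalently $|\operatorname{Hess}d|^2\le h^2$), feed in the Ricci upper bound, and then apply a scalar ODE comparison with the explicit solution $\psi$ (the paper writes the comparison in terms of $\varphi=-\psi$ solving $\varphi'+\varphi^2+b^2=0$, $\varphi(0)=-\mathcal{H}$, which is the same thing). Your closing remarks on the role of $2$-convexity in reversing the direction of the comparison and on Lemma~\ref{L3} handling the persistence of $2$-convexity are also exactly in line with the paper.
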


\begin{proof}
	Given $p_0\in \Sigma$, the exponential map of the normal bundle of $\Sigma$ defines a geodesic curve $ \gamma_{p_0} :\mathbb{R}_+\rightarrow M$ which is normal to $\Sigma$ at $p_0$. Denote $E_s(p_0) \defeq  \gamma_{p_0}(s)$. As explained in \cite[Section 2.3]{CGH20} (we summarise the main points here and refer to \cite{CGH20} for the details), given $v\in T_{p_0}\Sigma$ and a smooth curve $p:(-\ep,\ep)\rightarrow\Sigma$ satisfying $p(0) = p_0$ and $p'(0) = v$, 
	\begin{align*}
	J(s) \defeq \frac{d}{dt}\bigg|_{t=0} E_s(p(t)) \in T\Sigma_s 
	\end{align*}
	defines a Jacobi field along the normal geodesic $ \gamma_{p_0}(s)$, and thus the shape operator $S_s$ of $\Sigma_s$ satisfies the Riccati equation 
	\begin{align}\label{20}
	S_s'(J(s)) + S_s^2(J(s)) + R_{\nabla d}(J(s))= 0
	\end{align}
	where $R_{\nabla d}(J(s)) \defeq \operatorname{Riem}(J(s),\nabla d)\nabla d$ is the curvature tensor of $M$ in the normal direction $\nabla d$. After identifying $T_{ \gamma_{p_0}(s)}\Sigma_s$ with $T_{p_0}\Sigma$ via parallel transport along $ \gamma $, we can view $S_s$ and $R_{\nabla d}$ as endomorphisms on $T_{p_0}\Sigma$, which by \eqref{20} then satisfy the following Riccati equation as endomorphisms on $T_{p_0}\Sigma$:
	\begin{align*}
	S'(s) + S^2(s) + R(s) = 0. 
	\end{align*}
 In particular, 
 \begin{align}\label{21}
     \operatorname{tr}S'(s) + \operatorname{tr}S^2(s) + \operatorname{tr}R(s) = 0.
 \end{align}
 
	We now appeal to 2-convexity of the parallel hypersurfaces to assert that $\operatorname{tr}S^2 \leq (\operatorname{tr}S)^2$, which after being substituted into \eqref{21} yields the Riccati inequality
	\begin{align}\label{23}
	\operatorname{tr} S'(s) + [\operatorname{tr}S(s)]^2 + \operatorname{tr}R(s) \geq 0. 
	\end{align}
	We then appeal to the Ricci upper bound $\operatorname{Ric}_g \leq  b^2$ to infer from \eqref{23} the inequality 
	\begin{align}\label{23'}
		\operatorname{tr}S'(s) + [\operatorname{tr}S(s)]^2 +  b^2\operatorname{Id} \geq 0.
	\end{align}
	As explained in the proof of \cite[Theorem 16]{CGH20}, the desired estimate \eqref{22} then follows from \eqref{23'} and the argument in \cite[Corollary 1.6.2]{Kar89}: for $0<\delta<\delta_{r, b ,\mathcal{H}}$, one has $H_{\Sigma_\delta} \leq -\varphi(\delta)$ where $\varphi$ is the solution to the one-dimensional Riccati equation corresponding to the LHS of \eqref{23'}:
	\begin{align*}
	\begin{cases}
	\varphi'(t) + \varphi(t)^2 +  b^2= 0 \\
	\varphi(0) = -\mathcal{H}.
	\end{cases}
	\end{align*} 
	Since the solution $\varphi$ is given by
	\begin{align*}
	\varphi(t) = \begin{cases}
	-\frac{ b^2\tan( b  t) +  b  \mathcal{H} }{ b  - \mathcal{H} \tan( b  t)} & \text{if } b >0 \\[5pt]
	-\frac{\mathcal{H}}{1-\mathcal{H} t} & \text{if } b =0,
	\end{cases}
	\end{align*}
	this completes the proof of the theorem.
\end{proof}

Using Theorem \ref{D}, we now prove a counterpart to Lemma \ref{L1}:

\begin{lem}\label{L3}
	Let $(M^{n+1},g)$ be a smooth compact manifold with non-empty boundary $\Sigma^n$, and let $R>0$ denote the rolling radius of $\Sigma$ in $M$. Suppose:
	\begin{enumerate}
		\item $0<\kappa\leq \kappa_i$ and $H\leq \mathcal{H}$ on $\Sigma$ 
		\item $\operatorname{Ric}_g \leq  b^2$ and $\operatorname{Sec}_g \geq -\alpha^2$ in $M_r$ for some $ b \geq 0$, $0\leq\alpha\leq\kappa$ and $r\leq R$. 
	\end{enumerate}
	Then for each $0<\delta<\delta_{r, b ,\mathcal{H}}$, the mean curvature of $\Sigma_\delta$ satisfies the upper bound
	\begin{align}\label{27}
	H_{\Sigma_\delta} \leq P_{ b , \mathcal{H}}(\delta) - \delta^{-1}.
	\end{align}
\end{lem}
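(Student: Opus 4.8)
The plan is to deduce Lemma \ref{L3} from the mean curvature comparison theorem (Theorem \ref{D}) by verifying that its three hypotheses hold in the present setting. Hypotheses 1 and 2 of Theorem \ref{D} are immediate: the Ricci upper bound $\operatorname{Ric}_g \leq b^2$ in $M_r$ is assumed directly, and $H \leq \mathcal{H}$ on $\Sigma$ is part of Assumption 1. The only substantive point is hypothesis 3, namely that the parallel hypersurfaces $\Sigma_\delta$ remain 2-convex for $0 \leq \delta < r$. Since we are assuming the stronger condition that $\Sigma$ itself is (strictly) convex with $\kappa_i \geq \kappa > 0$, it suffices to show that convexity of $\Sigma$ propagates to all the parallel hypersurfaces up to the rolling radius, since convexity implies 2-convexity. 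This is exactly the role flagged for the condition $0 \leq \alpha \leq \kappa$ in the remark following Theorem \ref{C}.

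First I would set up the Riccati comparison for the shape operator along a normal geodesic, exactly as in the proof of Theorem \ref{D}: identifying $T_{\gamma_{p_0}(s)}\Sigma_s$ with $T_{p_0}\Sigma$ by parallel transport, the shape operator $S(s)$ satisfies $S'(s) + S^2(s) + R(s) = 0$ as endomorphisms, where $R(s) = R_{\nabla d}$ is the normal curvature operator. Now I would use the \emph{lower} sectional curvature bound $\operatorname{Sec}_g \geq -\alpha^2$ in $M_r$, which gives $R(s) \geq -\alpha^2 \operatorname{Id}$ as a symmetric endomorphism; feeding this into the Riccati equation yields the differential inequality $S'(s) \leq -S^2(s) + \alpha^2 \operatorname{Id} = -(S(s) - \alpha\operatorname{Id})(S(s) + \alpha\operatorname{Id})$ in the appropriate sense. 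The standard Riccati comparison argument (as in \cite[Corollary 1.6.2]{Kar89} or the principal curvature comparison theorem \cite[Theorem 12]{CGH20}) then shows that the principal curvatures $\kappa_i(s)$ of $\Sigma_s$ are bounded \emph{below} by the solution $\psi(s)$ of the scalar ODE $\psi' + \psi^2 - \alpha^2 = 0$ with $\psi(0) = \kappa$ (using $\kappa_i(0) = \kappa_i \geq \kappa$). Since $\kappa \geq \alpha \geq 0$, the solution $\psi$ stays nonnegative for all $s \geq 0$ — indeed $\psi(s) = \alpha \coth(\alpha s + \operatorname{arccoth}(\kappa/\alpha))$ when $\alpha > 0$, or $\psi(s) = \kappa/(1+\kappa s)$ when $\alpha = 0$, both nonnegative — hence $\kappa_i(s) \geq \psi(s) \geq 0$ for all $0 \leq s < r$, so each $\Sigma_\delta$ is convex and in particular 2-convex.

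With hypothesis 3 of Theorem \ref{D} verified, I would simply invoke that theorem to conclude $H_{\Sigma_\delta} \leq P_{b,\mathcal{H}}(\delta) - \delta^{-1}$ for every $0 < \delta < \delta_{r,b,\mathcal{H}}$, which is precisely \eqref{27}. The one technical point requiring a little care — and the step I would expect to be the main obstacle — is making the matrix Riccati comparison rigorous enough to conclude the \emph{pointwise lower bound on each principal curvature} (as opposed to just a trace bound): one needs that $S(s) \geq \psi(s)\operatorname{Id}$ in the sense of symmetric endomorphisms, which follows from the monotonicity of the Riccati flow under the curvature bound but should be cited from the comparison geometry literature (e.g.\ the argument underlying \cite[Theorem 12]{CGH20}) rather than reproved. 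Everything else is a direct application of Theorem \ref{D}.
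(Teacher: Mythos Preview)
Your approach is exactly the paper's: invoke the principal curvature comparison theorem under the lower sectional curvature bound $\operatorname{Sec}_g\geq -\alpha^2$ together with $\kappa_i\geq\kappa\geq\alpha$ to conclude that each $\Sigma_\delta$ remains strictly convex (hence 2-convex) for $0\leq\delta<r$, and then apply Theorem~\ref{D}. The only discrepancies are bookkeeping: the relevant comparison result is \cite[Theorem~10]{CGH20} (the lower-bound version, not Theorem~12), and the paper writes the scalar ODE with initial value $\psi(0)=-\kappa$ and concludes $\kappa_i(\delta)\geq -\psi(\delta)$ (e.g.\ $\kappa/(1-\kappa\delta)$ when $\alpha=0$, which is sharp on the Euclidean ball); your formula with $\psi(0)=\kappa$ gives a weaker lower bound, but since it is still strictly positive it suffices for the 2-convexity conclusion.
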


\begin{proof}
	Consider the ODE
	\begin{align*}
	\begin{cases}
	\psi'(t) + \psi(t)^2 - \alpha^2 = 0 \\
	\psi(0) = -\kappa.
	\end{cases}
	\end{align*}
	A direct computation yields the solution 
	\begin{align*}
	\psi(t) = \begin{cases}
	-\frac{\kappa}{1-\kappa t} & \text{if }\alpha = 0 \\
	-\frac{\kappa \alpha - \alpha^2 \tanh(\alpha t)}{\alpha - \kappa\tanh(\alpha t)} & \text{if }0<\alpha<\kappa \\
	-\kappa & \text{if }0<\alpha=\kappa, 
	\end{cases} 
	\end{align*}
	which exists on the maximal interval $t\in[0,T_{\kappa,\alpha})$ where
	\begin{align*}
	T_{\kappa,\alpha} = \begin{cases}
	\frac{1}{\kappa} & \text{if }\alpha=0 \\
	\alpha^{-1}\operatorname{arctanh}(\kappa^{-1}\alpha) & \text{if }0<\alpha<\kappa \\
	+\infty & \text{if }0<\alpha=\kappa.
	\end{cases}
	\end{align*} 
	By the principal curvature comparison theorem \cite[Theorem 10]{CGH20}, the rolling radius of $\Sigma$ in $M$ satisfies $R \leq T_{\kappa,\alpha}$, and for each $\delta\in(0,R)$, the principal curvatures $\kappa_i(\delta)$ of the parallel hypersurface $\Sigma_\delta$ satisfy the lower bound $\kappa_i(\delta) \geq -\psi(\delta)$.  In particular, the principal curvatures remain positive for $\delta\in(0,R)$, and thus the parallel hypersurfaces remain strictly convex (and in particular, 2-convex) for $\delta\in(0,R)$. Thus Theorem \ref{D} applies, yielding for each $0<\delta< \delta_{r, b ,\mathcal{H}}$ the upper bound \eqref{27}.
\end{proof}

We now complete the proof of Theorem \ref{C}:

\begin{proof}[Proof of Theorem \ref{C}]
	Following the proof of Lemma \ref{L2} verbatim, one sees that under the hypotheses of Lemma \ref{L3}, for each $\ep>0$, $0<\delta<\delta_{r, b ,\mathcal{H}}$ and $u\in C^\infty(M)$, it holds that
	\begin{align}\label{28}
	\ep\int_{\Sigma}|\nabla^T u|^2\,dS_g \leq \int_{M_\delta}|\nabla^2 u|^2\,dv_g + \ep \big(\ep+P_{ b ,\mathcal{H}}(\delta)\big)\int_{M_\delta}|\nabla u|^2\,dv_g.
	\end{align}
	One may then repeat the proof of Theorem \ref{A}, using \eqref{28} in place of \eqref{11} and replacing all appearances of $E$ by $P$. 
\end{proof}

\subsection{A spectral gap result when $\operatorname{Ric}_g<0$}\label{103}

We have seen that one may obtain a positive lower bound for $\sigma_1$ under relaxed convexity assumptions if the Ricci curvature of $M$ is positive near $\Sigma$ rather than just non-negative. A natural question to ask is whether our methods allow one to obtain a positive lower bound for $\sigma_1$ even if the Ricci curvature only has a negative lower bound, say if the boundary is sufficiently convex. Whilst we have not been able to answer this question affirmatively, we finish with the following observation, which provides a spectral gap result in this geometric setting as a direct consequence of Reilly's formula:
\begin{prop}
	Let $(M^{n+1},g)$ be a smooth compact manifold with non-empty boundary $\Sigma^n$, and suppose that:
	\begin{enumerate}
		\item $\operatorname{Ric}_g \geq -a^2$ in $M$ for some $a>0$, 
		\item $0<\sqrt{\frac{2a^2}{n}}<\kappa\leq \kappa_i$ on $\Sigma$.
	\end{enumerate}
Then the Steklov spectrum of $M$ admits the following spectral gap: 
\begin{align*}
	\{\sigma_1,\sigma_2,\dots\} \cap\bigg(\frac{a^2}{n\kappa}\,,\, \frac{\kappa}{2}\bigg) = \emptyset. 
\end{align*}
\end{prop}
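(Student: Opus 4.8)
The plan is to apply Reilly's formula, as in Lemma \ref{L4}, but now exploiting the full Riccati/Bochner structure rather than simply discarding the Hessian term; the key point is that the dimension constant in the Cauchy--Schwarz inequality $(\Delta f)^2 \leq (n+1)|\nabla^2 f|^2$ is not actually needed here because $\Delta f = 0$, so in fact $|\nabla^2 f|^2 \geq 0$ suffices and the argument is purely algebraic. Concretely, suppose $f$ is an eigenfunction for some Steklov eigenvalue $\sigma = \sigma_j$. Taking $u = f$ in Reilly's formula \eqref{25} and using $\Delta f = 0$, $f_\nu = \sigma f$, $\nabla^T f_\nu = \sigma \nabla^T f$, together with the hypotheses $\operatorname{Ric}_g \geq -a^2$, $H \geq n\kappa$ and $A_g(\nabla^T f, \nabla^T f) \geq \kappa |\nabla^T f|^2$, one obtains
\begin{align*}
0 \geq \int_M |\nabla^2 f|^2\,dv_g - a^2 \int_M |\nabla f|^2\,dv_g + n\kappa\sigma^2 \int_\Sigma f^2\,dS_g - 2\sigma\int_\Sigma |\nabla^T f|^2\,dS_g + \kappa \int_\Sigma |\nabla^T f|^2\,dS_g.
\end{align*}
Dropping the nonnegative Hessian term and using $\int_M |\nabla f|^2\,dv_g = \sigma\int_\Sigma f^2\,dS_g$ (integration by parts with \eqref{16}), this becomes
\begin{align*}
0 \geq (n\kappa\sigma^2 - a^2\sigma)\int_\Sigma f^2\,dS_g + (\kappa - 2\sigma)\int_\Sigma |\nabla^T f|^2\,dS_g.
\end{align*}

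Next I would analyze the sign of the two coefficients. The first coefficient is $\sigma(n\kappa\sigma - a^2)$, which is nonnegative precisely when $\sigma \geq \frac{a^2}{n\kappa}$ (recall $\sigma > 0$ for $j\geq 1$). The second coefficient $\kappa - 2\sigma$ is nonnegative precisely when $\sigma \leq \frac{\kappa}{2}$. Therefore, if $\sigma \in \left(\frac{a^2}{n\kappa}, \frac{\kappa}{2}\right)$ — which is a genuinely nonempty interval exactly under the hypothesis $\sqrt{2a^2/n} < \kappa$, i.e.\ $\frac{a^2}{n\kappa} < \frac{\kappa}{2}$ — then both terms on the right-hand side are strictly positive unless $f$ is locally constant on $\Sigma$ and $\nabla^T f \equiv 0$. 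One then argues that $f$ cannot be constant on $\Sigma$: if $\nabla^T f \equiv 0$ on $\Sigma$ then $f$ is constant on each boundary component, and combined with $f_\nu = \sigma f$ and harmonicity one deduces (via the maximum principle or by noting $\int_\Sigma f\,dS_g = 0$ for $j\geq 1$, which forces $f\equiv 0$ on a component where it is constant) that $f \equiv 0$, a contradiction. Hence the right-hand side is strictly positive, contradicting the displayed inequality, so no Steklov eigenvalue lies in $\left(\frac{a^2}{n\kappa}, \frac{\kappa}{2}\right)$.

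I expect the only subtle point — and thus the main obstacle — to be the degenerate case where both integral coefficients vanish on the support of the relevant terms, i.e.\ ruling out an eigenfunction that is constant on the boundary. This is handled cleanly using the orthogonality $\int_\Sigma f\,dS_g = 0$ satisfied by any eigenfunction with $\sigma_j > 0$: since such an $f$ restricted to a boundary component is constant with zero mean, it vanishes there, and then unique continuation for harmonic functions (or simply the fact that a harmonic function vanishing to first order, $f = f_\nu = 0$, on an open piece of the boundary must vanish identically) gives $f\equiv 0$. Everything else is the routine algebra of tracking two sign conditions, and the strictness in the conclusion comes directly from the strict inequality $\kappa - 2\sigma > 0$ together with $|\nabla^T f|^2 \not\equiv 0$. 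I would also remark that if $\sigma = \frac{a^2}{n\kappa}$ or $\sigma = \frac{\kappa}{2}$ the argument only gives nonstrict information, which is why the excluded interval is open.
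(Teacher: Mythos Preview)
Your proof is correct and follows essentially the same route as the paper's: apply Reilly's formula to a Steklov eigenfunction, insert the bounds $\operatorname{Ric}_g\geq -a^2$, $H\geq n\kappa$, $A_g\geq \kappa$, and convert the boundary term $\int_\Sigma f_\nu^2$ into $\sigma\int_M|\nabla f|^2$ (equivalently $\sigma^2\int_\Sigma f^2$) to obtain an inequality whose sign pattern rules out $\sigma\in(\tfrac{a^2}{n\kappa},\tfrac{\kappa}{2})$. Your discussion of the degenerate case $\nabla^T f\equiv 0$ is unnecessary, since for $\sigma>\tfrac{a^2}{n\kappa}$ the term $(n\kappa\sigma^2-a^2\sigma)\int_\Sigma f^2$ is already strictly positive (as $\int_\Sigma f^2>0$ for any nonzero eigenfunction), which alone gives the contradiction.
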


\begin{proof}
	By Lemma \ref{L4}, we see that if $f$ is an eigenfunction associated to a Steklov eigenvalue $\sigma_j$ then, under the geometric assumptions of the proposition, 
	\begin{align*}
	0 \geq \int_M |\nabla^2 f|^2\,dv_g + (n\kappa\sigma_j - a^2)\int_M |\nabla f|^2\,dv_g + (\kappa-2\sigma_j)\int_{\Sigma}|\nabla^T f|^2\,dS_g.
	\end{align*}
	We therefore see that if $\sigma_j \geq \frac{a^2}{n\kappa}$, then $\sigma_j \geq \frac{\kappa}{2}$. This implication is non-trivial precisely when $\kappa>\sqrt{\frac{2a^2}{n}}$, which we have assumed to be the case. Likewise, if $\sigma_j \leq \frac{\kappa}{2}$, then it must be the case that $\sigma_j \leq \frac{a^2}{n\kappa}$, which again is a non-trivial implication precisely when $\kappa>\sqrt{\frac{2a^2}{n}}$. 
\end{proof}

\small
\bibliography{references}{}
\bibliographystyle{siam}
~\\
~\\
\indent \textsc{Johns Hopkins University, 404 Krieger Hall, Department of Mathematics, 3400 N. Charles Street, Baltimore, MD 21218, US.} \newline
\indent \textit{Email address:} \texttt{jdunca33@jhu.edu} 

~\\
\indent \textsc{Johns Hopkins University, 404 Krieger Hall, Department of Mathematics, 3400 N. Charles Street, Baltimore, MD 21218, US.} \newline
\indent \textit{Email address:} \texttt{akumar65@jhu.edu}

	\end{document}